\newtheorem{thm}{Theorem}[section]
\newtheorem{cor}[thm]{Corollary}
\newtheorem{lem}[thm]{Lemma}
\newtheorem{prop}[thm]{Proposition}
\theoremstyle{definition}
\theoremstyle{property}
\theoremstyle{remark}
\newtheorem{rem}[thm]{Remark}
\numberwithin{equation}{section}
\definecolor{ceruleanblue}{rgb}{0.16, 0.32, 0.75}
\begin{document}

\title[Hypercohomologies of truncated twisted holomorphic de Rham complexes]{Hypercohomologies of truncated twisted holomorphic de Rham complexes}

\author{Lingxu Meng}
\address{Department of Mathematics, North University of China, Taiyuan, Shanxi 030051,  P. R. China}
\email{menglingxu@nuc.edu.cn}%

\subjclass[2010]{Primary 32C35; Secondary 14F43}
\keywords{hypercohomology; truncated twisted holomorphic de Rham complex; Leray-Hirsch theorem; K\"{u}nneth theorem; Poincar\'{e}-Serre duality theorem; blowup formula}


\begin{abstract}
  We investigate the hypercohomologies of truncated twisted holomorphic de Rham complexes on (\emph{not necessarily compact}) complex manifolds. In particular, we generalize Leray-Hirsch, K\"{u}nneth and Poincar\'{e}-Serre duality theorems on them. At last,  a blowup formula is given, which  affirmatively  answers a question posed by Chen, Y. and Yang, S. in \cite{CY}. 
\end{abstract}

\maketitle

\section{Introduction}
All complex manifolds mentioned  in this paper are connected and not necessarily compact unless otherwise specified. Let $X$ be a complex manifold.
The de Rham theorem says that the  cohomology $H^k(X,\mathbb{C})$  with complex coefficients  can be computed by the hypercohomology $\mathbb{H}^k(X,\Omega_X^{\bullet})$ of the holomorphic de Rham complex $\Omega_X^{\bullet}$.
Moreover, if  $X$ is a compact K\"{a}hler manifold, the Hodge filtration $F^pH_{dR}^k(X,\mathbb{C})=\bigoplus\limits_{r\geq p}H_{\bar{\partial}}^{r,k-r}(X,\mathbb{C})$ on the de Rham cohomology $H_{dR}^k(X,\mathbb{C})$ is just the hypercohomology $\mathbb{H}^k(X,\Omega_X^{\geq p})$ of the truncated holomorphic de Rham complex $\Omega_X^{\geq p}$, which plays a significant role in the Hodge theory.
The Deligne cohomology is an important object in the research of algebraic cycles, since it connects with the intermediate Jacobian and the integeral Hodge class group.
It  
has a strong relation with the singular cohomology $H^*(X,\mathbb{Z})$  and the hypercohomology $\mathbb{H}^*(X,\Omega_X^{\leq p-1})$ of the truncated holomorphic de Rham complex $\Omega_X^{\leq p-1}$ via the long exact sequence
\begin{displaymath}
\tiny{\xymatrix{
\cdots\ar[r] & H^{k-1}(X,\mathbb{Z})\ar[r]^{} &\mathbb{H}^{k-1}(X,\Omega_X^{\leq p-1})\ar[r]^{} &H_D^{k}(X,\mathbb{Z}(p))\ar[r]^{ }&H^{k}(X,\mathbb{Z})\ar[r]^{}&\mathbb{H}^{k}(X,\Omega_X^{\leq p-1})\ar[r]&\cdots.
}}
\end{displaymath}
So it seems natural to investigate the hypercohomology of truncated holomorphic de Rham complexes.

We consider the more general cases - the hypercohomology $\mathbb{H}^*(X,\Omega_X^{[s,t]}(\mathcal{L}))$ of truncated twisted holomorphic de Rham complexes $\Omega_X^{[s,t]}(\mathcal{L})$ (see Section 3.2 or \cite[Section 4.2]{CY}  for the definition) and obtain the generalizations of several classical theorems for the de Rham cohomology with values in a local system as follows.
\begin{thm}[Leray-Hirsch theorem]\label{L-H}
Let $\pi:E\rightarrow X$ be a holomorphic fiber bundle with compact fibers over a complex manifold $X$ and  $\mathcal{L}$ a local system of $\underline{\mathbb{C}}_X$-modules of finite rank on $X$. Assume that there exist  $d$-closed  forms $t_1$, $\ldots$, $t_r$ of pure degrees on $E$ such that the restrictions  of their Dolbeault  classes $[t_1]_{\bar{\partial}}$, $\ldots$, $[t_r]_{\bar{\partial}}$ to $E_x$  is a basis of $H^{\bullet,\bullet}_{\bar{\partial}}(E_x)=\bigoplus\limits_{p,q\geq 0}H^{p,q}_{\bar{\partial}}(E_x)$ for every $x\in X$.
Then there exists an isomorphism
\begin{displaymath}
\bigoplus\limits_{i=1}^r\mathbb{H}^{k-u_i-v_i}(X,\Omega_X^{[s-u_i,t-u_i]}(\mathcal{L})) \tilde{\rightarrow} \mathbb{H}^{k}(E,\Omega_E^{[s,t]}(\pi^{-1}\mathcal{L}))
\end{displaymath}
for any $k$, $s$, $t$, where $(u_i,v_i)$ is the degree of $t_i$ for $1\leq i\leq r$.
\end{thm}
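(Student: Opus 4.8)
The plan is to write down the Leray--Hirsch map explicitly, reduce the statement to the Dolbeault level by a d\'evissage on the length $t-s$ of the truncation window, and settle the resulting base case by a base-change argument that upgrades the fibrewise hypothesis to a global one. For each $i$, pulling back along $\pi$ and wedging with $t_i$ gives a map of complexes of sheaves on $X$,
\[
\Phi_i\colon \Omega_X^{[s-u_i,t-u_i]}(\mathcal{L})[-u_i-v_i]\longrightarrow R\pi_*\,\Omega_E^{[s,t]}(\pi^{-1}\mathcal{L}),\qquad \alpha\longmapsto \pi^*\alpha\wedge t_i,
\]
realized on Dolbeault models by $A^{p,q}(X,\mathcal{L})\ni\alpha\mapsto\pi^*\alpha\wedge t_i\in A^{p+u_i,q+v_i}(E,\pi^{-1}\mathcal{L})$. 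Since $t_i$ has pure bidegree $(u_i,v_i)$, the condition $dt_i=0$ forces $\partial t_i=\bar\partial t_i=0$, so wedging with $t_i$ commutes with both $\partial$ and $\bar\partial$ and $\Phi_i$ is a chain map; the bidegree $(u_i,v_i)$ simultaneously produces the total shift $-u_i-v_i$ and the window shift $[s-u_i,t-u_i]$. Putting $\Phi=\bigoplus_i\Phi_i$ and using $\mathbb{H}^k(E,-)=\mathbb{H}^k(X,R\pi_*-)$, it suffices to prove that $\Phi$ is a quasi-isomorphism; as this is local on $X$, I may replace $X$ by a contractible $U$ over which $E\cong U\times F$ and $\mathcal{L}$ is trivial.

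Next I would induct on $t-s$. The brutal-truncation short exact sequence
\[
0\to \Omega_E^{[s+1,t]}(\pi^{-1}\mathcal{L})\to \Omega_E^{[s,t]}(\pi^{-1}\mathcal{L})\to \Omega_E^{s}(\pi^{-1}\mathcal{L})[-s]\to 0
\]
and the direct sum over $i$ of the parallel sequences on $X$ (with windows $[s-u_i+1,t-u_i]$ and quotient $\Omega_X^{s-u_i}(\mathcal{L})[-(s-u_i)]$) are compatible with $\Phi$, because $\wedge t_i$ preserves the filtration by holomorphic degree. They yield a morphism of the associated long exact sequences of hypercohomology in which, by the inductive hypothesis, the maps on the sub- and quotient-terms are isomorphisms, so the five lemma gives the isomorphism in the middle. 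Equivalently, one may compare the Fr\"{o}licher-type spectral sequences of the two truncated complexes; their convergence is automatic since the window $s\le p\le t$ leaves only finitely many columns and the anti-holomorphic degree is bounded by the complex dimension, so each is of finite total width. Either way, everything reduces to the base case $s=t$.

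The base case is the sheaf-level statement that, for every $j$, $\wedge t_i$ induces an isomorphism $R^{j}\pi_*\,\Omega_E^{s}(\pi^{-1}\mathcal{L})\cong\bigoplus_{i:\,v_i=j}\Omega_X^{\,s-u_i}(\mathcal{L})$; a map of complexes inducing isomorphisms on all cohomology sheaves is a quasi-isomorphism, so this is exactly $\Phi$ for the one-term complex. Filtering $\Omega_E^{s}$ by horizontal degree gives graded pieces $\pi^*\Omega_X^{a}\otimes\Omega_{E/X}^{s-a}$, so $R^{j}\pi_*$ is governed by the relative Dolbeault sheaves $R^{j}\pi_*\Omega_{E/X}^{\,b}$, whose fibre at $x$ is $H^{b,j}_{\bar\partial}(E_x)$. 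Here compactness of the fibres is essential: by hypothesis $\dim H^{b,j}_{\bar\partial}(E_x)$ equals the number of $t_i$ with $(u_i,v_i)=(b,j)$ and is therefore \emph{constant} in $x$, so Grauert's base-change theorem makes these direct images locally free and compatible with restriction to fibres, while the global $d$-closed classes $[t_i]_{\bar\partial}$ furnish a frame that splits the horizontal filtration and identifies the direct images as claimed. The hard part is precisely this upgrade from the fibrewise basis to an isomorphism of higher direct images: it rests on properness of $\pi$ together with the constancy of the fibre Hodge numbers supplied by the hypothesis, which is what forces the base-change morphisms $\wedge t_i$ to be isomorphisms of coherent sheaves rather than merely on each fibre. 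Once this is secured, the d\'evissage of the previous paragraph completes the proof.
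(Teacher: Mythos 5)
Your overall strategy is sound and, at the structural level, matches the paper's: the map is the same ($\bigoplus_i\pi^*(\cdot)\wedge t_i$, with $dt_i=0$ guaranteeing compatibility with both $\partial$ and $\bar\partial$ and the bidegree $(u_i,v_i)$ accounting for both shifts), and your d\'evissage on the window length via the brutal-truncation exact sequences and the five lemma is, as you yourself observe, equivalent to what the paper does, namely comparing the (bounded, hence convergent) column-filtration spectral sequences of the double complexes $\bigoplus_i S^{\bullet,\bullet}(X,\mathcal{L},s-u_i,t-u_i)[-u_i,-v_i]$ and $S^{\bullet,\bullet}(E,\pi^{-1}\mathcal{L},s,t)$ and checking an isomorphism on $E_1$-pages. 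The genuine divergence is in how the single-column statement is handled. The paper treats it as a black box: the $E_1$-isomorphism $\bigoplus_i H^{q-v_i}(X,\mathcal{L}\otimes\Omega_X^{p-u_i})\cong H^q(E,\pi^{-1}\mathcal{L}\otimes\Omega_E^p)$ is exactly the Leray--Hirsch theorem for twisted Dolbeault cohomology, quoted from \cite[Theorem 5.6(2)]{M4}; it therefore never needs to localize on $X$, pass through $R\pi_*$, or invoke Grauert. You instead sheafify and attempt to reprove that input.

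That base case is where your write-up has a real gap. The sentence asserting that the classes $[t_i]_{\bar\partial}$ ``furnish a frame that splits the horizontal filtration and identifies the direct images as claimed'' is carrying all the weight. The fibre of $R^j\pi_*\bigl(\Omega_E^p\otimes\pi^{-1}\mathcal{L}\bigr)$ at $x$ is $H^j(E_x,\Omega_E^p|_{E_x})$, not $\bigoplus_a\Lambda^aT_x^*X\otimes H^{p-a,j}_{\bar\partial}(E_x)$: the restriction $\Omega_E^p|_{E_x}$ only has a filtration with those graded pieces, and one must prove the associated spectral sequence degenerates before Grauert base change and a fibrewise basis argument can identify the direct image with $\bigoplus_{i:v_i=j}\Omega_X^{s-u_i}(\mathcal{L})$. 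That degeneration is precisely where the existence of \emph{global} $d$-closed representatives $t_i$ enters, and establishing it is the actual content of \cite[Theorem 5.6]{M4} (see also \cite[Lemma 3.3]{RYY2}). So you have not found a shortcut around the cited result; you have sketched a proof of it with its hardest step elided. Either supply that degeneration argument or, as the paper does, cite the Dolbeault-level Leray--Hirsch theorem; with that input in place the rest of your argument is correct.
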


\begin{thm}[K\"{u}nneth theorem]\label{K}
Let $X$, $Y$ be complex manifolds and $\mathcal{L}$,  $\mathcal{H}$  local systems of $\underline{\mathbb{C}}_X$-, $\underline{\mathbb{C}}_Y$-modules of finite ranks on $X$, $Y$ respectively.      If $X$ or $Y$ is compact, then there is an isomorphism
\begin{displaymath}
\bigoplus\limits_{\substack{a+b=c\\u+w=s\\v+w=t}}\mathbb{H}^{a}(X,\Omega_X^{[u,v]}(\mathcal{L}))\otimes_{\mathbb{C}} \mathbb{H}^{b}(Y,\Omega_Y^{[w,w]}(\mathcal{H}))\cong \mathbb{H}^{c}(X\times Y,\Omega_{X\times Y}^{[s,t]}(\mathcal{L}\boxtimes\mathcal{H}))
\end{displaymath}
for any $c$, $s$, $t$.
\end{thm}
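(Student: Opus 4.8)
The plan is to reduce the statement to an algebraic K\"unneth formula for bicomplexes of smooth forms, after computing all three hypercohomologies by Dolbeault-type resolutions. Let $(E,\nabla_X)$ and $(F,\nabla_Y)$ be the flat holomorphic bundles underlying the local systems $\mathcal{L}$ and $\mathcal{H}$. Resolving each term $\Omega_X^{p}(E)$ of the truncated complex by its Dolbeault complex $\mathscr{A}_X^{p,\bullet}(E)$ of fine sheaves and passing to global sections, I would identify $\mathbb{H}^{\bullet}(X,\Omega_X^{[u,v]}(\mathcal{L}))$ with the total cohomology of the bicomplex $A_X^{p,q}=\Gamma(X,\mathscr{A}_X^{p,q}(E))$ (with $u\le p\le v$, $q\ge 0$), whose horizontal differential is $\nabla_X^{1,0}$ and whose vertical differential is $\bar\partial$; the same recipe computes the hypercohomologies on $Y$ and on $X\times Y$. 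In particular $\mathbb{H}^{b}(Y,\Omega_Y^{[w,w]}(\mathcal{H}))$ is just the Dolbeault cohomology $H^{w,b-w}_{\bar\partial}(Y,F)$, coming from a single-column complex.

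The key structural input is that smooth forms on a product factor: the $A_X^{p,q}$ and $A_Y^{p,q}$ are nuclear Fr\'echet spaces, and $A_{X\times Y}^{P,Q}(\mathcal{L}\boxtimes\mathcal{H})\cong\bigoplus_{p_1+p_2=P,\,q_1+q_2=Q}A_X^{p_1,q_1}\,\widehat{\otimes}\,A_Y^{p_2,q_2}$, compatibly with $\nabla^{1,0}=\nabla_X^{1,0}+\nabla_Y^{1,0}$ and $\bar\partial=\bar\partial_X+\bar\partial_Y$, the product truncation retaining the columns with $P=p_1+p_2\in[s,t]$. I would then filter this total complex by $w:=p_2$, the holomorphic degree along $Y$. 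The only part of the differential that raises $w$ is $\nabla_Y^{1,0}$, so the associated graded is the direct sum over $w$ of the complex computing $\mathbb{H}^{\bullet}(X,\Omega_X^{[s-w,t-w]}(\mathcal{L}))$ completed-tensored with the single-column complex computing $\mathbb{H}^{\bullet}(Y,\Omega_Y^{[w,w]}(\mathcal{H}))$. The truncation window on $X$ shifts exactly to $[s-w,t-w]$, which is the reindexing $u=s-w$, $v=t-w$ of the statement.

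Compactness now enters: if $Y$ is compact then each $H^{w,\bullet}_{\bar\partial}(Y,F)$ is finite-dimensional (symmetrically if $X$ is compact), so the topological K\"unneth theorem for complexes of nuclear Fr\'echet spaces with one factor of finite-dimensional cohomology applies to each summand. This identifies the $E_1$-page of the $w$-filtration with $\bigoplus_{a+b=c}\bigoplus_{w}\mathbb{H}^{a}(X,\Omega_X^{[s-w,t-w]}(\mathcal{L}))\otimes_{\mathbb{C}}\mathbb{H}^{b}(Y,\Omega_Y^{[w,w]}(\mathcal{H}))$, which is exactly the left-hand side; over $\mathbb{C}$ there are no $\mathrm{Tor}$ terms, and the completed and algebraic tensor products agree once one factor is finite-dimensional.

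The main obstacle, as I see it, is to show that this spectral sequence degenerates at $E_1$, i.e. that the differential induced by $\nabla_Y^{1,0}$ vanishes on the $E_1$-page, so that the associated graded reconstructs $\mathbb{H}^{c}(X\times Y,\Omega_{X\times Y}^{[s,t]}(\mathcal{L}\boxtimes\mathcal{H}))$ with no further cancellation; this is where the single-degree truncation $[w,w]$ on the $Y$-factor together with compactness must genuinely be used. An attractive alternative that bypasses the spectral-sequence bookkeeping is to deduce the isomorphism from the Leray--Hirsch theorem (Theorem \ref{L-H}) applied to the trivial bundle $\mathrm{pr}_X\colon X\times Y\to X$ with compact fiber $Y$, taking the classes $t_i$ to be $\mathrm{pr}_Y$-pullbacks of forms on $Y$ whose Dolbeault classes form a homogeneous basis of $H^{\bullet,\bullet}_{\bar\partial}(Y,F)$, so that the degree bookkeeping $(u_i,v_i)\leftrightarrow(w,b-w)$ matches automatically; there the analogue of the degeneration step is the verification that these pullbacks satisfy the $d$-closedness hypothesis of Theorem \ref{L-H}.
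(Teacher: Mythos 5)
Your proposal is candid about where the difficulty sits, and that candour is the most important part of it: the $E_1$-degeneration of your $w$-filtration spectral sequence (equivalently, in your Leray--Hirsch variant, the existence of $d$-closed representatives for a basis of $H^{\bullet,\bullet}_{\bar\partial}$ of the fibre) is not a bookkeeping step to be filled in later --- it fails in general, and with it the statement itself. Specialize to $X$ a point with $\mathcal{L}$, $\mathcal{H}$ trivial: the claimed isomorphism reduces to $\bigoplus_{w=s}^{t}H^{c-w}(Y,\Omega_Y^{w})\cong\mathbb{H}^{c}(Y,\Omega_Y^{[s,t]})$, i.e.\ to $E_1$-degeneration of the (truncated) Fr\"olicher spectral sequence of $Y$, which the paper's own Section~3.5 records only as an inequality. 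For the Iwasawa manifold with $s=0$, $t=3$, $c=1$ the left side has dimension $h^{1,0}+h^{0,1}=5$ while the right side is $b_{1}=4$ (one gets the same discrepancy with $X=\mathbb{P}^{1}$ if a positive-dimensional factor is preferred). The $d_{1}$ of your spectral sequence is precisely the map induced by $\nabla_Y^{1,0}$ on $H^{w,\bullet}_{\bar\partial}(Y,F)$, i.e.\ the Fr\"olicher $d_{1}$ of the $Y$-factor, and neither compactness nor the single-column truncation $[w,w]$ makes it vanish.

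For comparison, the paper argues by a different mechanism: it introduces $f=pr_1^*(\bullet)\wedge pr_2^*(\bullet)$ from $\bigoplus ss\bigl(S^{\bullet,\bullet}(X,\mathcal{L},u,v)\otimes_{\mathbb{C}}S^{\bullet,\bullet}(Y,\mathcal{H},w,w)\bigr)$ to $S^{\bullet,\bullet}(X\times Y,\mathcal{L}\boxtimes\mathcal{H},s,t)$ and invokes the Dolbeault K\"unneth theorem to get an isomorphism of $E_1$-pages of the column filtration, concluding an isomorphism on total cohomology. But this founders on exactly the point you isolated: on the source the $[w,w]$-truncation kills $\partial_Y$, whereas on the target $\partial_Y$ survives inside $\partial_{X\times Y}$, so $f$ fails to commute with the first differential and is not a morphism of double complexes; the spectral-sequence comparison is therefore unavailable. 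So your diagnosis of the obstruction is correct and in fact sharper than the paper's treatment. The remedy cannot be a cleverer argument; the statement needs additional hypotheses (for instance, degeneration at $E_1$ of the twisted Fr\"olicher spectral sequence of one factor together with $d$-closed representability of its Dolbeault classes, as holds in the compact K\"ahler case), under which your Leray--Hirsch route would go through.
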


\begin{thm}[Poincar\'{e}-Serre duality theorem]\label{P-S}
Let $X$ be an $n$-dimensional compact complex manifold and $\mathcal{L}$ a local system of $\underline{\mathbb{C}}_X$-modules of finite rank on $X$.   Then there exists an isomorphism
\begin{displaymath}
\mathbb{H}^{k}(X,\Omega_X^{[s,t]}(\mathcal{L})) \cong (\mathbb{H}^{2n-k}(X,\Omega_X^{[n-t,n-s]}(\mathcal{L}^{\vee})))^*
\end{displaymath}
for any $k$, $s$, $t$, where $*$ denotes the dual of a $\mathbb{C}$-vector space.
\end{thm}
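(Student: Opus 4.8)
The plan is to realize both hypercohomology groups through the Dolbeault resolution and to pair them by integration. Writing $\mathcal{E}=\mathcal{O}_X\otimes_{\underline{\mathbb{C}}_X}\mathcal{L}$ for the holomorphic flat bundle attached to $\mathcal{L}$, with flat connection $\nabla=\nabla^{1,0}+\bar{\partial}_{\mathcal{E}}$, the complex $\Omega_X^{[s,t]}(\mathcal{L})$ is quasi-isomorphic to the simple complex of the double complex $K^{p,q}=\mathscr{A}^{p,q}(X,\mathcal{E})$ ($s\leq p\leq t$, $q\geq 0$) with horizontal differential $\nabla^{1,0}$ and vertical differential $\bar{\partial}_{\mathcal{E}}$; thus $\mathbb{H}^{k}(X,\Omega_X^{[s,t]}(\mathcal{L}))=H^{k}(\mathrm{Tot}\,K)$. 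The dual local system $\mathcal{L}^{\vee}$ furnishes $(\mathcal{E}^{\vee},\nabla^{\vee})$ and an analogous double complex with columns $n-t\leq p'\leq n-s$. Since $X$ is compact, all these spaces are finite dimensional, so it suffices to produce a perfect pairing between them.

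First I would define the pairing. For $\alpha\in\mathscr{A}^{p,q}(X,\mathcal{E})$ and $\beta\in\mathscr{A}^{n-p,n-q}(X,\mathcal{E}^{\vee})$ the fibrewise duality $\mathcal{E}\otimes\mathcal{E}^{\vee}\to\underline{\mathbb{C}}_X$ produces a scalar $(n,n)$-form $\langle\alpha\wedge\beta\rangle$, and I set $\langle[\alpha],[\beta]\rangle=\int_X\langle\alpha\wedge\beta\rangle$. The bidegree constraint $p+p'=n$, $q+q'=n$ is exactly what matches the truncation $[s,t]$ on the $\mathcal{L}$-side with the truncation $[n-t,n-s]$ on the $\mathcal{L}^{\vee}$-side, and it carries total degree $k=p+q$ to $2n-k$. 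Compatibility of $\nabla$ and $\nabla^{\vee}$ with the duality gives a Leibniz rule $d\langle\alpha\wedge\beta\rangle=\langle\nabla\alpha\wedge\beta\rangle\pm\langle\alpha\wedge\nabla^{\vee}\beta\rangle$, so by Stokes' theorem on the compact $X$ the two total differentials are adjoint and the pairing descends to cohomology. The truncation causes no boundary problem: the part of $\nabla^{1,0}\alpha$ discarded in column $t+1$ could pair only with column $n-t-1$, which lies outside $[n-t,n-s]$, hence contributes nothing.

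The heart of the proof is non-degeneracy, and here I would filter $\mathrm{Tot}\,K$ by the holomorphic degree $p$. The associated (truncated, twisted Fr\"olicher) spectral sequence has $E_1^{p,q}=H^{q}(X,\Omega_X^{p}\otimes\mathcal{E})$ for $s\leq p\leq t$, and the integration pairing induces on $E_1$ the map
\[
H^{q}(X,\Omega_X^{p}\otimes\mathcal{E})\otimes H^{n-q}(X,\Omega_X^{n-p}\otimes\mathcal{E}^{\vee})\to\mathbb{C}.
\]
Using the canonical isomorphism $(\Omega_X^{p})^{\vee}\otimes\Omega_X^{n}\cong\Omega_X^{n-p}$, this is precisely classical Serre duality on the compact complex manifold $X$, hence perfect. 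The two spectral sequences (for $\mathcal{L}$ and $\mathcal{L}^{\vee}$) are therefore dual at $E_1$, and because the pairing comes from a pairing of complexes the differentials $d_1$ are mutually adjoint; this adjointness should propagate through the pages, so each $E_r$ carries a perfect pairing and, by boundedness (the column range is finite and $q\leq n$), so does $E_{\infty}=\mathrm{gr}\,\mathbb{H}$. A perfect pairing on the graded pieces of a finite filtration forces a perfect pairing on $\mathbb{H}$ itself, yielding $\mathbb{H}^{k}(X,\Omega_X^{[s,t]}(\mathcal{L}))\cong(\mathbb{H}^{2n-k}(X,\Omega_X^{[n-t,n-s]}(\mathcal{L}^{\vee})))^{*}$.

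The main obstacle I anticipate is exactly this propagation through a non-degenerating spectral sequence: on a general (non-K\"ahler) compact complex manifold the Fr\"olicher-type spectral sequence need not collapse at $E_1$, so I cannot simply read off the statement from Serre duality on the $E_1$-page. I must check that the induced pairing is compatible with all the higher differentials $d_r$ — that $d_r$ on the $\mathcal{L}^{\vee}$-side is the adjoint of $d_r$ on the $\mathcal{L}$-side — and that the filtration is strictly compatible with the pairing, so that perfectness passes from $E_r$ to $E_{r+1}$ and finally from $E_{\infty}$ to the abutment without loss through the a priori nontrivial extensions. An alternative that packages this bookkeeping more cleanly is to invoke Serre--Grothendieck duality in the derived category, computing $R\mathcal{H}om(\Omega_X^{[s,t]}(\mathcal{L}),\Omega_X^{n}[n])$ and identifying it, via $(\Omega_X^{p})^{\vee}\otimes\Omega_X^{n}\cong\Omega_X^{n-p}$ together with the fact that the transpose of $\nabla$ is the dual connection $\nabla^{\vee}$, with $\Omega_X^{[n-t,n-s]}(\mathcal{L}^{\vee})[2n]$; the same sign- and differential-matching issue then reappears as the task of verifying this isomorphism of complexes.
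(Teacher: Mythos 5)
Your plan coincides in substance with the paper's proof --- realize both sides by the truncated twisted Dolbeault double complexes, pair them by $\alpha\mapsto\int_X tr(\alpha\wedge\bullet)$, and observe that on the $E_1$-page this is classical Serre duality $H^{q}(X,\mathcal{L}\otimes\Omega_X^{p})\cong(H^{n-q}(X,\mathcal{L}^{\vee}\otimes\Omega_X^{n-p}))^{*}$ --- but the step you yourself flag as the main obstacle is a genuine gap as written. Saying that the adjointness of the differentials ``should propagate through the pages,'' that each $E_r$ then carries a perfect pairing, and that perfectness survives the passage from $E_{\infty}=\mathrm{gr}\,\mathbb{H}$ through the extensions to $\mathbb{H}$ itself, is exactly the content you would still have to prove; in the ``pairing of spectral sequences'' formulation this requires checking that every higher $d_r$ on the $\mathcal{L}^{\vee}$-side is adjoint to $d_r$ on the $\mathcal{L}$-side and that the two filtrations are strictly dual, which is real and error-prone bookkeeping on a manifold where the Fr\"olicher-type sequence need not degenerate.

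The paper's proof closes this gap with a single repackaging that you should adopt: integration is viewed not as a pairing but as a \emph{morphism of double complexes} $\mathcal{D}:S^{\bullet,\bullet}(X,\mathcal{L},s,t)\rightarrow\left(S^{\bullet,\bullet}(X,\mathcal{L}^{\vee},n-t,n-s)\right)^{*}[-n,-n]$ into the shifted dual double complex. A morphism of double complexes induces a morphism of the associated spectral sequences commuting with \emph{all} differentials $d_r$ for free, so no page-by-page adjointness needs to be verified; both complexes are bounded, so an isomorphism at $E_1$ (Serre duality plus the elementary fact $H^{k}((K^{\bullet})^{*})\cong(H^{-k}(K^{\bullet}))^{*}$, the paper's Lemma 2.1, to identify the $E_1$-page of the dual complex) forces an isomorphism of the total cohomologies by the standard comparison theorem, the passage from $E_{\infty}$ to the abutment being the five lemma on a finite filtration with no strictness hypothesis. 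The target is then identified with $(\mathbb{H}^{2n-k}(X,\Omega_X^{[n-t,n-s]}(\mathcal{L}^{\vee})))^{*}$ using $(sK^{\bullet,\bullet})^{*}\cong s((K^{\bullet,\bullet})^{*})$ for bounded double complexes. Your derived-category alternative would also work but, as you note, merely relocates the same verification; the morphism-to-the-dual-complex formulation is the cheapest way to make your argument complete.
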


Recently, the blowup formulas for the de Rham cohomology with values in a local system were studied with different approaches  \cite{CY,M1,M2,M4,YZ,Z}.
More generally, Chen, Y. and Yang, S. posed a question on the existence of  a blowup formula for the hypercohomology of a truncated  twisted holomorphic de Rham complex (\cite[Question 10]{CY}). Now, we prove that it truly exists.
\begin{thm}\label{blowup}
Let $\pi:\widetilde{X}\rightarrow X$ be the blowup of a  complex manifold $X$ along a  complex submanifold $Y$ and $\mathcal{L}$ a local system of $\underline{\mathbb{C}}_X$-modules of finite rank on $X$. Assume that $i_Y:Y\rightarrow X$ is the inclusion and $r=\emph{codim}_{\mathbb{C}}Y\geq 2$. Then there exists an isomorphism
\begin{equation}\label{blowup0}
\mathbb{H}^k(X,\Omega_X^{[s,t]}(\mathcal{L}))\oplus \bigoplus_{i=1}^{r-1}\mathbb{H}^{k-2i}(Y,\Omega_Y^{[s-i,t-i]}(i_Y^{-1}\mathcal{L}))\cong \mathbb{H}^{k}(\widetilde{X},\Omega_{\widetilde{X}}^{[s,t]}(\pi^{-1}\mathcal{L})),
\end{equation}
for any $k$, $s$, $t$.
\end{thm}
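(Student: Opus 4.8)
The plan is to reduce the global statement to an isomorphism of complexes of sheaves on $X$, namely a splitting of the derived direct image, and then to identify the exceptional contribution by means of the Leray--Hirsch theorem. Write $E=\pi^{-1}(Y)$ for the exceptional divisor, $j\colon E\hookrightarrow\widetilde{X}$ for its inclusion, and $p=\pi|_{E}\colon E\to Y$ for the induced morphism; then $E=\mathbb{P}(N_{Y/X})$ is a $\mathbb{P}^{r-1}$-bundle over $Y$, the square built from $j,\pi,p,i_Y$ commutes, and $\pi$ restricts to a biholomorphism $\widetilde{X}\setminus E\xrightarrow{\sim}X\setminus Y$. The target I would aim for is a direct-sum decomposition
\begin{displaymath}
R\pi_{*}\,\Omega_{\widetilde{X}}^{[s,t]}(\pi^{-1}\mathcal{L})\;\cong\;\Omega_X^{[s,t]}(\mathcal{L})\;\oplus\;\bigoplus_{i=1}^{r-1} i_{Y*}\,\Omega_Y^{[s-i,t-i]}(i_Y^{-1}\mathcal{L})[-2i]
\end{displaymath}
in the derived category of sheaves on $X$. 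Granting this, applying $R\Gamma$ and using the identifications $R\Gamma(\widetilde{X},-)=R\Gamma(X,R\pi_{*}(-))$ and $R\Gamma(X,i_{Y*}(-))=R\Gamma(Y,-)$ produces the isomorphism \eqref{blowup0} at once.

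Concretely I would write down the comparison map $\Phi$ realizing this splitting and show it gives the isomorphism \eqref{blowup0}. On $\mathbb{H}^{k}(X,\Omega_X^{[s,t]}(\mathcal{L}))$ it is the pullback $\pi^{*}$, and on the $i$-th summand $\mathbb{H}^{k-2i}(Y,\Omega_Y^{[s-i,t-i]}(i_Y^{-1}\mathcal{L}))$ with $1\le i\le r-1$ it is the composite $j_{*}\bigl((p^{*}\,\cdot\,)\cup h^{i-1}\bigr)$, where $h=c_1(\mathcal{O}_E(1))\in H^{1,1}_{\bar\partial}(E)$ and $j_{*}$ is the Gysin morphism of the divisor $E$. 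One checks the truncation and degree bookkeeping: $p^{*}$ keeps the class in truncation $[s-i,t-i]$, cupping with the $(i-1,i-1)$-class $h^{i-1}$ moves it to $[s-1,t-1]$ while raising the cohomological degree to $k-2$, and $j_{*}$ (a Gysin map for a codimension-one submanifold) raises both by one step to land in $\mathbb{H}^{k}(\widetilde{X},\Omega_{\widetilde{X}}^{[s,t]}(\pi^{-1}\mathcal{L}))$.

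The Leray--Hirsch theorem enters through $p\colon E\to Y$. Since $1,h,\dots,h^{r-1}$ restrict on each fibre to a basis of $H^{\bullet,\bullet}_{\bar\partial}(\mathbb{P}^{r-1})$ and $h^{i}$ has pure bidegree $(i,i)$, Theorem \ref{L-H} gives the projective bundle formula
\begin{displaymath}
\mathbb{H}^{k}(E,\Omega_E^{[s,t]}(p^{-1}i_Y^{-1}\mathcal{L}))\cong\bigoplus_{i=0}^{r-1}\mathbb{H}^{k-2i}(Y,\Omega_Y^{[s-i,t-i]}(i_Y^{-1}\mathcal{L})),
\end{displaymath}
the $i=0$ summand being the image of $p^{*}$. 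This identifies the exceptional part of $R\pi_{*}$ and shows exactly which classes $\Phi$ must produce; the relations $j^{*}c_1(\mathcal{O}_{\widetilde{X}}(E))=-h$ and $\pi_{*}\pi^{*}=\mathrm{id}$ (the latter from $R\pi_{*}\mathcal{O}_{\widetilde{X}}=\mathcal{O}_X$) let one write an explicit inverse of $\Phi$ out of $\pi_{*}$, restriction to $E$, and the Leray--Hirsch projections, confirming bijectivity without appealing to cancellation of summands (which is unsafe for the possibly infinite-dimensional groups arising when $X$ is noncompact).

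The principal obstacle is the sheaf-level decomposition itself, equivalently the construction of the Gysin morphism and the verification that it, the pullback, and the cup product by $h$ all interact correctly with the truncation filtration $[s,t]$ --- this shifting is precisely what manufactures the indices $[s-i,t-i]$ and the shifts $k-2i$. Because $\pi$ is biholomorphic off $Y$ and the summands $i_{Y*}(-)$ are supported on $Y$, the decomposition is trivial away from $Y$, so it is enough to verify it in a neighbourhood of $Y$; there one passes to the local model of the blow-up of the total space of $N_{Y/X}$ along its zero section, where the truncated twisted complex factors along the $\mathbb{P}^{r-1}$-directions and the computation of $R\pi_{*}$ can be carried out fibrewise via the K\"{u}nneth theorem (Theorem \ref{K}) and the explicit cohomology of $\mathbb{P}^{r-1}$. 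Carefully transporting this local decomposition back, compatibly with the twisting by $\mathcal{L}$ and with the Gysin and cup-product structures, is the delicate part on which the argument rests.
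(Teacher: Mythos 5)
Your comparison morphism is exactly the one the paper uses, namely $\pi^{*}+\sum_{i=1}^{r-1}i_{E*}\circ(u^{i-1}\wedge)\circ(\pi|_{E})^{*}$, realized on the current complexes $T^{\bullet}(\widetilde{X},\pi^{-1}\mathcal{L},s,t)$ so that the Gysin map is just pushforward of currents along the divisor inclusion. But the route you propose for proving it is an isomorphism --- a direct-sum decomposition of $R\pi_{*}\Omega_{\widetilde{X}}^{[s,t]}(\pi^{-1}\mathcal{L})$ in the derived category, verified in the local model of the blow-up of the total space of $N_{Y/X}$ along its zero section --- leaves the entire difficulty unproved, and you say as much (``the delicate part on which the argument rests''). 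Two concrete problems: first, the K\"{u}nneth theorem you invoke (Theorem \ref{K}) requires one factor to be compact, whereas in the local model $U'\times\mathrm{Bl}_{0}V$ neither factor is compact, so the fibrewise computation of $R\pi_{*}$ cannot be carried out with that tool as stated; second, your map $\Phi$ is only defined on hypercohomology, so to ``check locally'' you would first have to lift it to a morphism of complexes of sheaves (or of $\pi_{*}$-acyclic resolutions) on $X$, which is an additional construction you do not supply. The proposed explicit inverse built from $\pi_{*}$, restriction to $E$ and the Leray--Hirsch projections likewise depends on identities ($j^{*}j_{*}=\cup(-h)$, $p_{*}h^{r-1}=1$, etc.) that are themselves the substance of the blow-up formula.

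The paper avoids all of this by a purely homological reduction. Writing $K^{\bullet}(s,t)$ and $L^{\bullet}(s,t)$ for the source and target global complexes, the truncation short exact sequences (\ref{exact}) give, for $0\leq s\leq t$, a commutative ladder of long exact sequences relating $f(s,t)$, $f(0,t)$ and $f(0,s-1)$. The base case $s=t$ is precisely the blow-up formula for the twisted Dolbeault cohomology of the single sheaf $\mathcal{L}\otimes\Omega^{t}$, quoted from \cite[Theorem 1.2]{M4}; induction on $t$ plus the five-lemma then gives the case $(0,t)$, and the five-lemma again gives general $(s,t)$. (The paper also notes a second proof via the $E_{1}$-page of the truncated twisted Fr\"{o}licher spectral sequence, in the style of Theorems \ref{L-H}--\ref{P-S}.) Either of these would be the natural way to close the gap in your outline: keep your morphism, but replace the local sheaf-theoretic decomposition by the five-lemma induction on the truncation length, with the Dolbeault blow-up formula as the only geometric input.
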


In Section 2, we recall some basic notions on complexes and give some properties of them, which may be well known for experts. In Section 3, the double complexes $\mathcal{S}^{\bullet,\bullet}_{X}(\mathcal{L},s,t)$ and $\mathcal{T}^{\bullet,\bullet}_{X}(\mathcal{L},s,t)$ are defined and studied, which play important roles in the study of the hypercohomology $\mathbb{H}^{k}(X,\Omega_X^{[s,t]}(\mathcal{L}))$. In Section 4, Theorems \ref{L-H}-\ref{blowup}  are proved.

\subsection*{Acknowledgements}
 The author would like to thank Drs. Chen, Youming and Yang, Song  for many useful discussions.
 The author sincerely thanks the referee for many valuable suggestions and careful reading of the manuscript.
 The author is supported by the Natural Science Foundation of Shanxi Province of China (Grant No. 201901D111141).


\section{Complexes}
Let $\mathcal{C}$ be an Abelian category. All complexes and morphisms in this section are in $\mathcal{C}$.

\subsection{Complexes}
A \emph{complex} $(K^\bullet,d)$ consists of objects $K^{k}$  and morphisms $d^{k}:K^{k}\rightarrow K^{k+1}$ for all $k\in\mathbb{Z}$ satisfying that $d^{k+1}\circ d^{k}=0$. Sometimes, we briefly denote $(K^\bullet,d)$  by $K^\bullet$  and denote  its $k$-th cohomology by $H^k(K^\bullet)$.
A \emph{morphism} $f:K^{\bullet}\rightarrow L^{\bullet}$ of  complexes means a family of morphisms $f^{k}:K^{k}\rightarrow L^{k}$ for all $k\in \mathbb{Z}$ satisfying that $f^{k+1}\circ d^{k}_{K}=d^{k}_{L}\circ f^{k}$  for all $k\in \mathbb{Z}$.
Moreover, if there exists a morphism $g:L^{\bullet}\rightarrow K^{\bullet}$ satisfying that $f\circ g=\textrm{id}$ and $g\circ f=\textrm{id}$, then  $f$ is said to be an \emph{isomorphism}.

A \emph{double complex} $(K^{\bullet,\bullet},d_1,d_2)$ consists of objects $K^{p,q}$ and morphisms $d^{p,q}_1:K^{p,q}\rightarrow K^{p+1,q}$, $d^{p,q}_2:K^{p,q}\rightarrow K^{p,q+1}$ for all $(p,q)\in\mathbb{Z}^2$ satisfying that $d_1^{p+1,q}\circ d_1^{p,q}=0$, $d_2^{p,q+1}\circ d_2^{p,q}=0$ and $d_1^{p,q+1}\circ d_2^{p,q}+d_2^{p+1,q}\circ d_1^{p,q}=0$.
Sometimes, it will be shortly written as $K^{\bullet,\bullet}$.
Fixed  $p\in \mathbb{Z}$, $(K^{p,\bullet}, d_2^{p,\bullet})$  is a complex.
A \emph{morphism} $f:K^{\bullet,\bullet}\rightarrow L^{\bullet,\bullet}$ of double complexes means a family of morphisms $f^{p,q}:K^{p,q}\rightarrow L^{p,q}$ for all $p$, $q$ $\in \mathbb{Z}$ satisfying that $f^{p+1,q}\circ d^{p,q}_{K1}=d^{p,q}_{L1}\circ f^{p,q}$ and $f^{p,q+1}\circ d^{p,q}_{K2}=d^{p,q}_{L2}\circ f^{p,q}$ for all $p$, $q$ $\in \mathbb{Z}$. If there exists a morphism $g:L^{\bullet,\bullet}\rightarrow K^{\bullet,\bullet}$ satisfying that $f\circ g=\textrm{id}$ and $g\circ f=\textrm{id}$, then  $f$ is said to be an \emph{isomorphism}.

Let $K^{\bullet,\bullet}$ be a double complex.
The \emph{complex} $sK^{\bullet,\bullet}$ \emph{associated to} $K^{\bullet,\bullet}$ is defined as $(sK^{\bullet,\bullet})^k=\bigoplus\limits_{p+q=k}K^{p,q}$ and $d^k=\sum\limits_{p+q=k}d_1^{p,q}+\sum\limits_{p+q=k}d_2^{p,q}$.   Let $(_KE^{\bullet,\bullet}_r, F^\bullet H^\bullet)$ be the spectral sequence associated to a double complex $K^{\bullet,\bullet}$. Then the first page $_KE^{p,q}_1=H^q(K^{p,\bullet})$ and $H^k=H^k(sK^{\bullet,\bullet})$.
A morphism $f:K^{\bullet,\bullet}\rightarrow L^{\bullet,\bullet}$ of double complexes naturally induces a  morphism $sf:sK^{\bullet,\bullet}\rightarrow sL^{\bullet,\bullet}$ of  complexes, where $(sf)^k=\sum\limits_{p+q=k}f^{p,q}$ for any $k\in\mathbb{Z}$.

\subsection{Shifted complexes}
For an integer $m$, denote by $(K^{\bullet}[m],d[m])$ the \emph{$m$-shifted complex} of $(K^\bullet,d)$, namely,
$K^{\bullet}[m]^{k}=K^{k+m}$ and $d[m]^{k}=d^{k+m}$ for all $k\in\mathbb{Z}$. For a pair of integers $(m,n)$, denote by  $(K^{\bullet,\bullet}[m,n],d_1[m,n],d_2[m,n])$ the \emph{$(m,n)$-shifted double complex} of $(K^{\bullet,\bullet},d_1,d_2)$, namely, $K^{\bullet,\bullet}[m,n]^{p,q}=K^{p+m,q+n}$ and $d_i[m,n]^{p,q}=d_i^{p+m,q+n}$ for any $(p,q)\in \mathbb{Z}^2$ and $i=1$, $2$.
Clearly, $s(K^{\bullet,\bullet}[m,n])=(sK^{\bullet,\bullet})[m+n]$.

\subsection{Dual complexes}
In this subsection, assume that $\mathcal{C}$ is the category of $\mathbb{C}$-vector spaces.

For a $\mathbb{C}$-vector space $V$, let $V^*=Hom_{\mathbb{C}}(V,\mathbb{C})$ be its dual space. The \emph{dual  complex} $((K^{\bullet})^*,d^*)$ of $(K^{\bullet},d)$   is defined as $(K^{\bullet})^{*k}=(K^{-k})^*$ and $d^{*k}f^{k}=(-1)^{k+1}f^{k}\circ d^{-k-1}$ for any $k\in \mathbb{Z}$.

Let $f^k\in (K^\bullet)^{*k}$ satisfy $d^{*k}f^k=0$, i.e., $f^k\circ d^{-k-1}=0$. Then $f^k$ naturally induces a linear map $\tilde{f}^k:H^{-k}(K^\bullet)\rightarrow \mathbb{C}$. The morphism $[f^k]\mapsto \tilde{f}^k$ gives the following isomorphism, where $[f^k]$ denotes the class in $H^{k}((K^\bullet)^*)$.
\begin{lem}[{\cite[IV, (16.5)]{Dem}}]\label{Duality}
 $H^{k}((K^\bullet)^*)\cong (H^{-k}(K^\bullet))^*$.
\end{lem}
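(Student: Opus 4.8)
The plan is to show directly that the explicit assignment $\Phi\colon [f^k]\mapsto \tilde f^k$ described just before the statement is a well-defined linear isomorphism; the conceptual reason this succeeds is that, over a field, the contravariant functor $\operatorname{Hom}_{\mathbb{C}}(-,\mathbb{C})$ is exact, so it commutes---up to the degree reversal $k\mapsto -k$ and the harmless signs built into the definition of $d^*$---with the formation of cohomology. Since the relevant maps are already written down in the statement, I would nonetheless carry out the three verifications by hand. Throughout I will use that $d^{*k}f^k=(-1)^{k+1}f^k\circ d^{-k-1}$ and $d^{*k-1}h^{k-1}=(-1)^{k}h^{k-1}\circ d^{-k}$, so that the nonzero scalars $(-1)^{k\pm 1}$ never interfere with the vanishing statements below.

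First I would check that $\Phi$ is well defined. If $d^{*k}f^k=0$, then $f^k\circ d^{-k-1}=0$, i.e.\ $f^k$ annihilates $\operatorname{im}d^{-k-1}$; restricting $f^k$ to $\ker d^{-k}$ and passing to the quotient $H^{-k}(K^\bullet)=\ker d^{-k}/\operatorname{im}d^{-k-1}$ yields the induced functional $\tilde f^k$. To see independence of the representative, suppose $f^k=d^{*k-1}h^{k-1}=(-1)^{k}h^{k-1}\circ d^{-k}$ for some $h^{k-1}\in(K^{-k+1})^*$; this vanishes identically on $\ker d^{-k}$, hence $\tilde f^k=0$. Linearity of $\Phi$ is immediate.

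The substantive part is bijectivity, and the one ingredient I must isolate is that every linear functional on a subspace of a $\mathbb{C}$-vector space extends to the whole space (equivalently, that short exact sequences of vector spaces split). This is the only place the hypothesis that $\mathbb{C}$ is a field enters, and it is the step to treat with care, especially since no finite-dimensionality is assumed, so the double-dual cannot be invoked as a shortcut. For injectivity, if $\tilde f^k=0$ then $f^k$ vanishes on $\ker d^{-k}$, so it factors as $f^k=\bar f\circ d^{-k}$ through $\operatorname{im}d^{-k}\subseteq K^{-k+1}$; extending $\bar f$ to some $h^{k-1}\in(K^{-k+1})^*$ gives $f^k=h^{k-1}\circ d^{-k}=d^{*k-1}\big((-1)^{k}h^{k-1}\big)$, whence $[f^k]=0$. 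For surjectivity, given $\phi\in (H^{-k}(K^\bullet))^*$ I would compose with the projection $\ker d^{-k}\twoheadrightarrow H^{-k}(K^\bullet)$ to obtain a functional on $\ker d^{-k}$ annihilating $\operatorname{im}d^{-k-1}$, extend it to some $f^k\in(K^{-k})^*$, and observe that $f^k\circ d^{-k-1}=0$ forces $d^{*k}f^k=0$ while $\tilde f^k=\phi$ by construction. Combining the three steps gives the asserted isomorphism $H^{k}((K^\bullet)^*)\cong (H^{-k}(K^\bullet))^*$.
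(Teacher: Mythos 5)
Your argument is correct and uses exactly the map $[f^k]\mapsto\tilde f^k$ that the paper describes before the lemma; the paper itself supplies no proof, citing Demailly instead, and your verification (well-definedness on cocycles/coboundaries, plus injectivity and surjectivity via extension of functionals from subspaces, the one point where the field hypothesis is genuinely used) is the standard argument behind that reference. The sign bookkeeping for $d^{*}$ is handled correctly, so nothing is missing.
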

The \emph{dual double complex} $((K^{\bullet,\bullet})^*,d^*_1,d^*_2)$ of $(K^{\bullet,\bullet},d_1,d_2)$  is defined as $(K^{\bullet,\bullet})^{*p,q}=(K^{-p,-q})^*$ and $d_1^{*p,q}f^{p,q}=(-1)^{p+q+1}f^{p,q}\circ d_1^{-p-1,-q}$, $d_2^{*p,q}f^{p,q}=(-1)^{p+q+1}f^{p,q}\circ d_2^{-p,-q-1}$ for any $(p,q)\in\mathbb{Z}^2$ and $f^{p,q}\in (K^{-p,-q})^*$.
\begin{lem}\label{simple-duality}
Suppose that the double complex $K^{\bullet,\bullet}$  is bounded, i.e., $K^{\bullet,\bullet}=0$ except finite $(p,q)\in\mathbb{Z}^2$. Then there exists an isomorphism  $(sK^{\bullet,\bullet})^*\cong s((K^{\bullet,\bullet})^*)$.
\end{lem}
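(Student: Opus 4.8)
The plan is to exhibit an explicit natural isomorphism and then to verify that it is a morphism of complexes, the only genuine work being the bookkeeping of signs.

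First I would identify the two complexes degreewise. By definition, in degree $k$ the left-hand side is
\[
((sK^{\bullet,\bullet})^*)^k = ((sK^{\bullet,\bullet})^{-k})^* = \Big(\bigoplus_{p+q=-k}K^{p,q}\Big)^*,
\]
while, after reindexing $(p,q)\mapsto(-p,-q)$, the right-hand side is
\[
(s((K^{\bullet,\bullet})^*))^k = \bigoplus_{p+q=k}(K^{\bullet,\bullet})^{*p,q} = \bigoplus_{p+q=k}(K^{-p,-q})^* = \bigoplus_{p+q=-k}(K^{p,q})^*.
\]
Here is exactly where the hypothesis enters: since $K^{\bullet,\bullet}$ is bounded, only finitely many $K^{p,q}$ with $p+q=-k$ are nonzero, so the canonical map $\bigoplus_{p+q=-k}(K^{p,q})^* \to (\bigoplus_{p+q=-k}K^{p,q})^*$ is an isomorphism (for an infinite sum one would only obtain the product). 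Thus for each $k$ I obtain a canonical isomorphism $\iota^k$ sending $\phi \in (\bigoplus_{p+q=-k}K^{p,q})^*$ to the tuple of its restrictions $(\phi^{-p,-q})_{p+q=k}$, where I abbreviate $\phi^{a,b}:=\phi|_{K^{a,b}}$.

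Next I would check that $\iota=(\iota^k)_k$ commutes with the differentials. For $y\in K^{c,d}$ with $c+d=-k-1$, since $d^{-k-1}=\sum(d_1+d_2)$ one computes that the $K^{c,d}$-component of the left-hand differential $d^{*k}\phi = (-1)^{k+1}\phi\circ d^{-k-1}$ equals
\[
(-1)^{k+1}\big(\phi^{c+1,d}\circ d_1^{c,d}+\phi^{c,d+1}\circ d_2^{c,d}\big).
\]
On the right-hand side, unwinding the definitions of $d_1^{*}$, $d_2^{*}$ and of the total differential of $(K^{\bullet,\bullet})^*$, the corresponding component at position $(-c,-d)$ is $(-1)^{-c-d}=(-1)^{k+1}$ times $\phi^{c+1,d}\circ d_1^{c,d}+\phi^{c,d+1}\circ d_2^{c,d}$; the exponents $(p+q+1)$ appearing in the dual double-complex differentials combine with the reindexing to reproduce precisely the sign $(-1)^{k+1}$. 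Hence the two components agree under $\iota$, so $\iota^{k+1}\circ d^{*k} = \delta^k\circ \iota^k$, where $\delta$ denotes the total differential of $s((K^{\bullet,\bullet})^*)$.

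Finally, since each $\iota^k$ is bijective by the degreewise identification above and $\iota$ is compatible with the differentials, $\iota$ is an isomorphism of complexes, which proves the lemma. The only delicate point is the sign verification of the middle step; I expect that to be the main (though entirely mechanical) obstacle, and one should make sure that the sign convention for $d^*$ on a single complex and those for $d_1^*,d_2^*$ on a double complex are mutually consistent, so that no residual sign survives and the \emph{canonical} map $\iota$ itself — with no extra sign twist — already serves as the isomorphism.
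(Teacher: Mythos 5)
Your proof is correct and follows essentially the same route as the paper: both define the canonical restriction map $f\mapsto (f|_{K^{-p,-q}})_{p+q=k}$ and use boundedness to identify the dual of a finite direct sum with the direct sum of duals. The only difference is that you explicitly carry out the sign verification (which checks out: both components reduce to $(-1)^{k+1}\bigl(\phi^{c+1,d}\circ d_1^{c,d}+\phi^{c,d+1}\circ d_2^{c,d}\bigr)$), whereas the paper leaves that step implicit.
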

\begin{proof}
For every $k$, define  a map $(sK^{\bullet,\bullet})^{*k}\rightarrow s((K^{\bullet,\bullet})^*)^k$ by $f\mapsto (f|_{K^{-p,-q}})_{p+q=k}$, where $f\in (sK^{\bullet,\bullet})^{*k}=Hom_{\mathbb{C}}(\bigoplus\limits_{p+q=k}K^{-p,-q},\mathbb{C})$. Since $K^{\bullet,\bullet}$  is bounded, this map gives an isomorphism $(sK^{\bullet,\bullet})^*\cong s((K^{\bullet,\bullet})^*)$.
\end{proof}

\subsection{Tensors of complexes}
In this subsection, assume that $\mathcal{C}$ is the category of $\mathbb{C}$-vector spaces.

For complexes $(K^{\bullet},d_K)$, $(L^{\bullet},d_L)$ and an integer $m$, the  double  complex $(K^\bullet\otimes_{\mathbb{C}}L^{\bullet})_m$ is defined as
$(K^\bullet\otimes_{\mathbb{C}}L^\bullet)_m^{p,q}=K^{p}\otimes_{\mathbb{C}} L^{q}$ and
$d_1^{p,q}=d_K^{p}\otimes 1_{L^q}, d^{p,q}_2=(-1)^{m+p}1_{K^p}\otimes d_L^{q}$ for any $(p,q)\in\mathbb{Z}^2$.
Clearly, $(K^\bullet\otimes_{\mathbb{C}}L^{\bullet})_m$ is equal to $(K^\bullet\otimes_{\mathbb{C}}L^{\bullet})_0$ for even $m$ and  to $(K^\bullet\otimes_{\mathbb{C}}L^{\bullet})_1$ for odd $m$.
\begin{lem}\label{isom}
There is an isomorphism $(K^\bullet\otimes_{\mathbb{C}}L^{\bullet})_0\cong(K^\bullet\otimes_{\mathbb{C}}L^{\bullet})_1$ of double complexes.
\end{lem}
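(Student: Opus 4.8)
The plan is to exhibit an explicit isomorphism by scaling each bidegree component by a suitable sign. First I would note that the two double complexes $(K^\bullet\otimes_{\mathbb{C}}L^\bullet)_0$ and $(K^\bullet\otimes_{\mathbb{C}}L^\bullet)_1$ have exactly the same objects $K^p\otimes_{\mathbb{C}}L^q$ in bidegree $(p,q)$ and exactly the same horizontal differential $d_1^{p,q}=d_K^p\otimes 1_{L^q}$; they differ only in the vertical differential, where the $m=1$ one is $(-1)^{p+1}1_{K^p}\otimes d_L^q=-\big((-1)^{p}1_{K^p}\otimes d_L^q\big)$, i.e. exactly $-1$ times the $m=0$ vertical differential in every bidegree. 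So the whole problem reduces to absorbing this global sign flip of $d_2$ into a rescaling of the components.

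Accordingly, I would look for a morphism $\phi\colon (K^\bullet\otimes_{\mathbb{C}}L^\bullet)_0\to (K^\bullet\otimes_{\mathbb{C}}L^\bullet)_1$ of the form $\phi^{p,q}=\epsilon(p,q)\,\textrm{id}_{K^p\otimes L^q}$ for signs $\epsilon(p,q)\in\{\pm 1\}$. Writing out the two compatibility conditions of a double-complex morphism: commutation with $d_1$ (which is unchanged) forces $\epsilon(p+1,q)=\epsilon(p,q)$, so $\epsilon$ must be independent of $p$; commutation with $d_2$ (which is negated) forces $\epsilon(p,q+1)=-\epsilon(p,q)$. These are solved by $\epsilon(p,q)=(-1)^q$, and I would therefore set $\phi^{p,q}=(-1)^q\,\textrm{id}_{K^p\otimes L^q}$.

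Then I would simply verify the two identities directly: for the horizontal one, $\phi^{p+1,q}\circ(d_K^p\otimes 1)=(-1)^q(d_K^p\otimes 1)=(d_K^p\otimes 1)\circ\phi^{p,q}$; for the vertical one, $\phi^{p,q+1}\circ\big((-1)^p1\otimes d_L^q\big)=(-1)^{p+q+1}(1\otimes d_L^q)=\big((-1)^{p+1}1\otimes d_L^q\big)\circ\phi^{p,q}$, which matches the $m=1$ vertical differential. Finally, since each $\phi^{p,q}$ is $\pm\,\textrm{id}$ and hence invertible with inverse $(-1)^q\,\textrm{id}$ again, $\phi$ is an isomorphism (its own inverse). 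There is essentially no obstacle here beyond getting the sign bookkeeping right; the only genuine choice is whether to place the sign on the $L$-factor via $(-1)^q$ or, symmetrically, on the $K$-factor, and the condition coming from $d_1$ rules out the latter, pinning down $(-1)^q$.
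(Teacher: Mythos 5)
Your proof is correct and uses exactly the same isomorphism as the paper, namely $\alpha^p\otimes\beta^q\mapsto(-1)^q\,\alpha^p\otimes\beta^q$; the paper simply asserts it in one line, while you additionally derive why the sign must be $(-1)^q$ and verify the two commutation identities. No issues.
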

\begin{proof}
Evidently, linear extending $\alpha^p\otimes\beta^q\mapsto(-1)^q\alpha^p\otimes\beta^q$ for $\alpha^p\in K^p$ and $\beta^p\in L^q$ gives an isomorphism $(K^\bullet\otimes_{\mathbb{C}}L^{\bullet})_0\rightarrow(K^\bullet\otimes_{\mathbb{C}}L^{\bullet})_1$ of double complexes.
\end{proof}

Combining \cite[IV, (15.6)]{Dem} and Lemma \ref{isom}, we conclude that
\begin{lem}\label{A-K}
For any integer $m$,
$\bigoplus\limits_{p+q=k}H^{p}(K^\bullet)\otimes_{\mathbb{C}} H^{q}(L^\bullet)\cong H^k(s((K^\bullet\otimes_{\mathbb{C}}L^\bullet)_m))$.
\end{lem}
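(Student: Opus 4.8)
The plan is to recognize the claimed isomorphism as the classical algebraic K\"{u}nneth formula for the tensor product of two cochain complexes of $\mathbb{C}$-vector spaces. Since we work over the field $\mathbb{C}$, every vector space is flat, so no $\mathrm{Tor}$ correction terms appear and the cohomology of the associated complex splits, in each degree, as the direct sum of tensor products of the cohomologies of the factors. The whole argument thus has two distinct ingredients: the homological content (which is standard over a field) and the reconciliation of the sign conventions encoded by the parameter $m$.

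First I would dispose of the dependence on $m$. As noted just before the statement, the double complex $(K^\bullet\otimes_{\mathbb{C}}L^\bullet)_m$ coincides with $(K^\bullet\otimes_{\mathbb{C}}L^\bullet)_0$ when $m$ is even and with $(K^\bullet\otimes_{\mathbb{C}}L^\bullet)_1$ when $m$ is odd. By Lemma \ref{isom} these two double complexes are isomorphic, hence so are their associated complexes $s((K^\bullet\otimes_{\mathbb{C}}L^\bullet)_0)$ and $s((K^\bullet\otimes_{\mathbb{C}}L^\bullet)_1)$, and in particular they have isomorphic cohomology in every degree. It therefore suffices to establish the formula for a single fixed value of $m$.

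For that remaining case I would invoke the K\"{u}nneth theorem for complexes of $\mathbb{C}$-vector spaces, \cite[IV, (15.6)]{Dem}, which furnishes $H^k(s((K^\bullet\otimes_{\mathbb{C}}L^\bullet)_{m_0}))\cong\bigoplus_{p+q=k}H^p(K^\bullet)\otimes_{\mathbb{C}}H^q(L^\bullet)$ for the sign convention $m_0$ adopted there. Should one prefer a self-contained argument, the standard route is available: over a field each complex splits (non-canonically) as a direct sum $K^\bullet\cong H^\bullet(K^\bullet)\oplus A^\bullet$, where $H^\bullet(K^\bullet)$ is viewed as a complex with zero differential and $A^\bullet$ is split-exact, and likewise for $L^\bullet$. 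Tensoring these decompositions and using additivity of $\otimes_{\mathbb{C}}$ and of the functor $s(-)$, the cross terms involving an acyclic factor stay acyclic, whereas $H^\bullet(K^\bullet)\otimes_{\mathbb{C}}H^\bullet(L^\bullet)$ carries zero differential and contributes exactly $\bigoplus_{p+q=k}H^p(K^\bullet)\otimes_{\mathbb{C}}H^q(L^\bullet)$ in degree $k$.

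The only genuine subtlety, and the step I expect to demand the most care, is the sign bookkeeping. The total differential on $(K^\bullet\otimes_{\mathbb{C}}L^\bullet)_m$ carries the Koszul sign $(-1)^{m+p}$ on the $d_L$-part, and the cited statement is phrased for one specific convention; Lemma \ref{isom} is precisely the device that matches the two parities, so that the classical homological content can be transported to the convention needed here. No boundedness or finiteness hypothesis on $K^\bullet$ or $L^\bullet$ is required, since over a field both the splitting $K^\bullet\cong H^\bullet(K^\bullet)\oplus A^\bullet$ and the passage to cohomology commute with arbitrary direct sums.
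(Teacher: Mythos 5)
Your proposal is correct and follows essentially the same route as the paper: the lemma is stated there as a direct consequence of combining the K\"{u}nneth formula \cite[IV, (15.6)]{Dem} with Lemma \ref{isom} to reconcile the two sign conventions indexed by the parity of $m$. The additional self-contained splitting argument over $\mathbb{C}$ is a fine supplement but not needed.
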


For double complexes $(K^{\bullet,\bullet},d_{K1},d_{K2})$ and $(L^{\bullet,\bullet},d_{L1},d_{L2})$,  set
\begin{displaymath}
A^{p,q;r,s}=K^{p,r}\otimes_{\mathbb{C}} L^{q,s},
\end{displaymath}
\begin{displaymath}
d_1^{p,q;r,s}=d_{K1}^{p,r}\otimes 1_{L^{q,s}}:A^{p,q;r,s}\rightarrow A^{p+1,q;r,s},
\end{displaymath}
\begin{displaymath}
d_2^{p,q;r,s}=(-1)^{p+r}1_{K^{p,r}}\otimes d_{L1}^{q,s}:A^{p,q;r,s}\rightarrow A^{p,q+1;r,s},
\end{displaymath}
\begin{displaymath}
d_3^{p,q;r,s}=d_{K2}^{p,r}\otimes 1_{L^{q,s}}:A^{p,q;r,s}\rightarrow A^{p,q;r+1,s},
\end{displaymath}
\begin{displaymath}
d_4^{p,q;r,s}=(-1)^{p+r}1_{K^{p,r}}\otimes d_{L2}^{q,s}:A^{p,q;r,s}\rightarrow A^{p,q;r,s+1},
\end{displaymath}
for any $p$, $q$, $s$, $r$. We easily see that
\begin{equation}\label{partial1}
(A^{p,q;\bullet,\bullet},d_3^{p,q;\bullet,\bullet},d_4^{p,q;\bullet,\bullet})= (K^{p,\bullet}\otimes_{\mathbb{C}}L^{q,\bullet})_p
\end{equation}
as double complexes for any $p$, $q$.

Define $ss(K^{\bullet,\bullet}\otimes_{\mathbb{C}}L^{\bullet,\bullet})$   as
\begin{displaymath}
ss(K^{\bullet,\bullet}\otimes_{\mathbb{C}}L^{\bullet,\bullet})^{k,l}=\bigoplus\limits_{\substack{p+q=k\\r+s=l}}A^{p,q;r,s}
\end{displaymath}
and
\begin{displaymath}
D_{1}^{k,l}=\sum\limits_{\substack{p+q=k\\r+s=l}} d_1^{p,q;r,s}+ \sum\limits_{\substack{p+q=k\\r+s=l}} d_2^{p,q;r,s}    :ss(K^{\bullet,\bullet}\otimes_{\mathbb{C}}L^{\bullet,\bullet})^{k,l}\rightarrow ss(K^{\bullet,\bullet}\otimes_{\mathbb{C}}L^{\bullet,\bullet})^{k+1,l}
\end{displaymath}
\begin{displaymath}
D_2^{k,l}=\sum\limits_{\substack{p+q=k\\r+s=l}} d_3^{p,q;r,s}+ \sum\limits_{\substack{p+q=k\\r+s=l}} d_4^{p,q;r,s}    :ss(K^{\bullet,\bullet}\otimes_{\mathbb{C}}L^{\bullet,\bullet})^{k,l}\rightarrow ss(K^{\bullet,\bullet}\otimes_{\mathbb{C}}L^{\bullet,\bullet})^{k,l+1}
\end{displaymath}
for any $k$, $l$. it is easily checked that $ss(K^{\bullet,\bullet}\otimes_{\mathbb{C}}L^{\bullet,\bullet})$ is a double complex.

By (\ref{partial1}), the following lemma holds.
\begin{lem}\label{partial2}
For any $k$, there is an isomorphism
\begin{displaymath}
(ss(K^{\bullet,\bullet}\otimes_{\mathbb{C}}L^{\bullet,\bullet})^{k,\bullet},D_2^{k,\bullet})\cong \bigoplus\limits_{p+q=k}s((K^{p,\bullet}\otimes_{\mathbb{C}}L^{q,\bullet})_p)
\end{displaymath}
of complexes.
\end{lem}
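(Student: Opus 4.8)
The plan is to show that the claimed isomorphism is in fact the canonical identification, obtained simply by unwinding the definitions of $D_2$ and of the associated single complex, together with (\ref{partial1}). First I would record that the degree-$l$ term of the left-hand complex, namely $ss(K^{\bullet,\bullet}\otimes_{\mathbb{C}}L^{\bullet,\bullet})^{k,l}=\bigoplus_{p+q=k,\,r+s=l}A^{p,q;r,s}$, already regroups as $\bigoplus_{p+q=k}\bigl(\bigoplus_{r+s=l}A^{p,q;r,s}\bigr)$. For each fixed pair $(p,q)$ with $p+q=k$, the inner sum $\bigoplus_{r+s=l}A^{p,q;r,s}$ is precisely the degree-$l$ term of the associated single complex $s(A^{p,q;\bullet,\bullet})$ of the double complex $(A^{p,q;\bullet,\bullet},d_3^{p,q;\bullet,\bullet},d_4^{p,q;\bullet,\bullet})$. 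So termwise the two sides of the asserted isomorphism already coincide.

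The key step is to verify that the differential $D_2^{k,\bullet}$ respects this decomposition by the first index pair. Inspecting the definitions, both $d_3^{p,q;r,s}:A^{p,q;r,s}\to A^{p,q;r+1,s}$ and $d_4^{p,q;r,s}:A^{p,q;r,s}\to A^{p,q;r,s+1}$ leave $(p,q)$ unchanged and alter only $(r,s)$; hence $D_2^{k,l}=\sum d_3^{p,q;r,s}+\sum d_4^{p,q;r,s}$ carries each summand indexed by $(p,q)$ into itself. Its restriction to the $(p,q)$-summand is exactly $\sum_{r+s=l}\bigl(d_3^{p,q;r,s}+d_4^{p,q;r,s}\bigr)$, which by the definition of the associated single complex is the differential of $s(A^{p,q;\bullet,\bullet})$.

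Finally, by (\ref{partial1}) the double complex $(A^{p,q;\bullet,\bullet},d_3,d_4)$ equals $(K^{p,\bullet}\otimes_{\mathbb{C}}L^{q,\bullet})_p$, so $s(A^{p,q;\bullet,\bullet})=s((K^{p,\bullet}\otimes_{\mathbb{C}}L^{q,\bullet})_p)$ as complexes. Taking the direct sum over all $(p,q)$ with $p+q=k$ then yields the desired isomorphism. Since every step is a definitional identification, there is no genuine obstacle here; the only point requiring care is confirming that $D_2$ — in contrast to $D_1$, which shifts the first pair of indices — acts solely on the second pair, so that the splitting over $p+q=k$ is genuinely preserved by the differential and the matching of signs and indices in $d_3$, $d_4$ goes through.
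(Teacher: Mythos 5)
Your proof is correct and follows the same route as the paper, which simply asserts the lemma as a consequence of (\ref{partial1}); you have merely made explicit the definitional unwinding (the termwise regrouping over $p+q=k$ and the fact that $D_2$ preserves this splitting) that the paper leaves implicit.
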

By definitions, it follows that
\begin{lem}\label{simple-simple-simple}
$s(ss(K^{\bullet,\bullet}\otimes_{\mathbb{C}}L^{\bullet,\bullet}))\cong s((sK^{\bullet,\bullet}\otimes_{\mathbb{C}}sL^{\bullet,\bullet})_0)$.
\end{lem}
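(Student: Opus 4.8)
The plan is to show that both complexes have literally the same underlying graded vector space in each total degree, that their total differentials agree summand by summand, and hence that the identity map furnishes the isomorphism. Since the statement is flagged as following ``by definitions,'' the whole argument is an unwinding of the four definitions involved, with the only genuine care needed being the sign bookkeeping.

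First I would unwind the left-hand side. By the definition of $ss(K^{\bullet,\bullet}\otimes_{\mathbb{C}}L^{\bullet,\bullet})$ its $(k,l)$-component is $\bigoplus_{p+q=k,\,r+s=l}K^{p,r}\otimes_{\mathbb{C}}L^{q,s}$, so applying $s$ gives, in total degree $n$,
\[
s(ss(K^{\bullet,\bullet}\otimes_{\mathbb{C}}L^{\bullet,\bullet}))^n=\bigoplus_{p+q+r+s=n}K^{p,r}\otimes_{\mathbb{C}}L^{q,s}.
\]
For the right-hand side, since $(sK^{\bullet,\bullet})^k=\bigoplus_{p+r=k}K^{p,r}$ and $(sL^{\bullet,\bullet})^l=\bigoplus_{q+s=l}L^{q,s}$, the $(k,l)$-component of $(sK^{\bullet,\bullet}\otimes_{\mathbb{C}}sL^{\bullet,\bullet})_0$ is $(sK^{\bullet,\bullet})^k\otimes_{\mathbb{C}}(sL^{\bullet,\bullet})^l$, and applying $s$ once more yields the same graded space $\bigoplus_{p+q+r+s=n}K^{p,r}\otimes_{\mathbb{C}}L^{q,s}$ in total degree $n$. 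I would remark here that the two intermediate double complexes carry \emph{different} bigradings --- the summand $K^{p,r}\otimes_{\mathbb{C}}L^{q,s}$ sits in bidegree $(p+q,r+s)$ on the left but in bidegree $(p+r,q+s)$ on the right --- yet both collapse to the common total degree $p+q+r+s$, so the underlying graded spaces coincide under the evident identification.

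It then remains to compare the two total differentials on a fixed summand $K^{p,r}\otimes_{\mathbb{C}}L^{q,s}$. On the left the total differential is $D_1+D_2=(d_1+d_2)+(d_3+d_4)$; grouping the $K$-terms $d_1,d_3$ and the $L$-terms $d_2,d_4$, and using $d_{K1}^{p,r}+d_{K2}^{p,r}=d_{sK}|_{K^{p,r}}$, $d_{L1}^{q,s}+d_{L2}^{q,s}=d_{sL}|_{L^{q,s}}$ together with the common sign $(-1)^{p+r}$ carried by $d_2$ and $d_4$, this equals $d_{sK}\otimes 1+(-1)^{p+r}\,1\otimes d_{sL}$. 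On the right the total differential of $(sK^{\bullet,\bullet}\otimes_{\mathbb{C}}sL^{\bullet,\bullet})_0$ in bidegree $(k,l)=(p+r,q+s)$ is $d_{sK}^k\otimes 1+(-1)^{0+k}\,1\otimes d_{sL}^l$, which restricted to $K^{p,r}\otimes_{\mathbb{C}}L^{q,s}$ is again $d_{sK}\otimes 1+(-1)^{p+r}\,1\otimes d_{sL}$ since $k=p+r$. Thus the differentials agree on every summand and the identity map is the desired isomorphism of complexes.

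I do not anticipate a real obstacle, as the statement is purely formal; the only point demanding attention is the sign check, namely that the uniform factor $(-1)^{p+r}$ attached to $d_2,d_4$ in the definition of $ss(K^{\bullet,\bullet}\otimes_{\mathbb{C}}L^{\bullet,\bullet})$ matches the factor $(-1)^{m+k}$ with $m=0$ and $k=p+r$ coming from the $(\,\cdot\,)_0$-tensor convention. This coincidence is precisely why taking $m=0$ (rather than an odd shift) lets the identification hold on the nose via the identity map, instead of only up to the sign twist of Lemma~\ref{isom}.
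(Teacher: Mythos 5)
Your proof is correct and is exactly the argument the paper leaves implicit: the paper offers no proof beyond the phrase ``by definitions,'' and your unwinding of the four definitions, with the sign check that $(-1)^{p+r}$ on $d_2,d_4$ matches $(-1)^{0+k}$ with $k=p+r$ in the $(\,\cdot\,)_0$ convention, is precisely what that phrase is standing in for. Nothing to correct.
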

Now, we can compute the cohomologies of tensors of double complexes.
\begin{prop}\label{cohomology of complexes}
$(1)$ For any $a\in\mathbb{Z}$,
\begin{displaymath}
\bigoplus\limits_{k+l=a}H^{k}(sK^{\bullet,\bullet})\otimes_{\mathbb{C}} H^{l}(sL^{\bullet,\bullet})\cong H^{a}(s(ss(K^{\bullet,\bullet}\otimes_{\mathbb{C}}L^{\bullet,\bullet}))).
\end{displaymath}

$(2)$ For any $k$, $l\in\mathbb{Z}$,
\begin{displaymath}
\bigoplus\limits_{\substack{p+q=k\\r+s=l}}H^{r}(K^{p,\bullet})\otimes_{\mathbb{C}} H^{s}(L^{q,\bullet})\cong H^{l}(ss(K^{\bullet,\bullet}\otimes_{\mathbb{C}}L^{\bullet,\bullet})^{k,\bullet}).
\end{displaymath}
\end{prop}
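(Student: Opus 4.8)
The plan is to deduce both statements directly from the structural lemmas already proved, so that the argument reduces to chaining together established isomorphisms and matching indices; no genuinely new construction is required, since the substance of the K\"unneth computation is already contained in Lemma \ref{A-K}.

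For part $(2)$ I would begin from Lemma \ref{partial2}, which identifies the complex $(ss(K^{\bullet,\bullet}\otimes_{\mathbb{C}}L^{\bullet,\bullet})^{k,\bullet},D_2^{k,\bullet})$ with the direct sum $\bigoplus_{p+q=k}s((K^{p,\bullet}\otimes_{\mathbb{C}}L^{q,\bullet})_p)$. Since cohomology commutes with direct sums, applying $H^l$ yields $\bigoplus_{p+q=k}H^l(s((K^{p,\bullet}\otimes_{\mathbb{C}}L^{q,\bullet})_p))$. Now $K^{p,\bullet}$ and $L^{q,\bullet}$ are ordinary single-indexed complexes, so Lemma \ref{A-K} applies with $m=p$ and rewrites each summand as $\bigoplus_{r+s=l}H^r(K^{p,\bullet})\otimes_{\mathbb{C}}H^s(L^{q,\bullet})$. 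Collapsing the two direct sums into one gives exactly $\bigoplus_{p+q=k,\,r+s=l}H^r(K^{p,\bullet})\otimes_{\mathbb{C}}H^s(L^{q,\bullet})$, as claimed.

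For part $(1)$ I would first invoke Lemma \ref{simple-simple-simple} to replace $s(ss(K^{\bullet,\bullet}\otimes_{\mathbb{C}}L^{\bullet,\bullet}))$ by the isomorphic complex $s((sK^{\bullet,\bullet}\otimes_{\mathbb{C}}sL^{\bullet,\bullet})_0)$, so that the two have the same $H^a$. Then, regarding $sK^{\bullet,\bullet}$ and $sL^{\bullet,\bullet}$ as the single complexes to which Lemma \ref{A-K} refers, I apply that lemma with $m=0$ to obtain $H^a(s((sK^{\bullet,\bullet}\otimes_{\mathbb{C}}sL^{\bullet,\bullet})_0))\cong\bigoplus_{k+l=a}H^k(sK^{\bullet,\bullet})\otimes_{\mathbb{C}}H^l(sL^{\bullet,\bullet})$, which is the assertion.

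The only point that needs care is that Lemma \ref{A-K} is stated for complexes, while in both parts it is applied to complexes arising from a double complex: the rows $K^{p,\bullet}$, $L^{q,\bullet}$ in part $(2)$ and the totalizations $sK^{\bullet,\bullet}$, $sL^{\bullet,\bullet}$ in part $(1)$. In each case these are genuine complexes of $\mathbb{C}$-vector spaces, so the hypothesis is satisfied, and the sign conventions are exactly those encoded by the subscript $m$ in $(\,\cdot\,)_m$, matched by the construction recorded in $(\ref{partial1})$. Beyond this indexing verification there is no real obstacle, as the nontrivial K\"unneth input was already absorbed into Lemma \ref{A-K} via \cite[IV, (15.6)]{Dem} and Lemma \ref{isom}; the present proposition merely packages it in the two forms needed for double complexes.
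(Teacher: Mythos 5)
Your proof is correct and follows exactly the paper's route: part $(1)$ is Lemma \ref{simple-simple-simple} combined with Lemma \ref{A-K} (with $m=0$), and part $(2)$ is Lemma \ref{partial2} combined with Lemma \ref{A-K} (with $m=p$, cohomology commuting with direct sums). The paper states this in one line; you have merely supplied the index bookkeeping it leaves implicit.
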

\begin{proof}
By Lemma \ref{A-K} and \ref{simple-simple-simple}, we get $(1)$. By Lemma \ref{A-K} and \ref{partial2}, we get $(2)$.
\end{proof}

\section{Truncated twisted holomorphic de Rham complex}
Let $X$ be an $n$-dimensional complex manifold.
Denote by $\mathcal{A}_X^{p,q}$ (resp. $\mathcal{A}_X^{k}$, $\mathcal{D}_X^{\prime p,q}$, $\Omega_X^p$) the sheaf of germs of smooth $(p,q)$-forms (resp. complex-valued smooth $k$-forms, $(p,q)$-currents, holomorphic $p$-forms) on $X$.
Denote by $\underline{\mathbb{C}}_X$ the constant sheaf with stalk $\mathbb{C}$ over $X$.
Let $\mathcal{L}$ be a local system of $\underline{\mathbb{C}}_X$-modules of finite rank  on $X$, namely, a locally constant sheaf of finite dimensional $\mathbb{C}$-vector spaces on $X$.
Tensoring $\otimes_{\underline{\mathbb{C}}_X}$ between sheaves of $\underline{\mathbb{C}}_X$-modules will be simply written as $\otimes$.
For a sheaf $\mathcal{F}$ over $X$ and an open subset $U\subseteq X$, $\Gamma(U, \mathcal{F})$ refers to the group of sections of $\mathcal{F}$ on $U$. For a complex $\mathcal{F}^\bullet$ of sheaves on $X$, $\mathbb{H}^k(X,\mathcal{F}^\bullet)$ denotes its $k$-th hypercohomology.

\subsection{Twisted Dolbeault cohomology}
Since $\partial:\mathcal{A}_X^{p,q}\rightarrow \mathcal{A}_X^{p+1,q}$  and $\bar{\partial}:\mathcal{A}_X^{p,q}\rightarrow \mathcal{A}_X^{p,q+1}$ are both morphisms of sheaves of $\underline{\mathbb{C}}_X$-modules, they naturally induce $\mathcal{L}\otimes\mathcal{A}_X^{p,q}\rightarrow \mathcal{L}\otimes\mathcal{A}_X^{p+1,q}$ and $\mathcal{L}\otimes\mathcal{A}_X^{p,q}\rightarrow \mathcal{L}\otimes\mathcal{A}_X^{p,q+1}$, which are still denoted by $\partial$ and $\bar{\partial}$ respectively.
Then $\mathcal{L}\otimes\Omega_X^p$ has a fine resolution
\begin{displaymath}
\xymatrix{
0\ar[r] &\mathcal{L}\otimes\Omega_X^p\ar[r]^{i} &\mathcal{L}\otimes\mathcal{A}_{X}^{p,0}\ar[r]^{\quad\bar{\partial}}&\cdots\ar[r]^{\bar{\partial}\quad\quad}&\mathcal{L}\otimes\mathcal{A}_{X}^{p,n}\ar[r]&0.
}
\end{displaymath}
These arguments also hold for the sheaves $\mathcal{D}_X^{\prime \bullet,\bullet}$. 
So
\begin{displaymath}
H^q(X,\mathcal{L}\otimes\Omega_X^p)\cong H^q(\Gamma(X,\mathcal{L}\otimes\mathcal{A}_{X}^{p,\bullet}))\cong H^q(\Gamma(X,\mathcal{L}\otimes\mathcal{D}_{X}^{\prime p,\bullet})),
\end{displaymath}
which are uniformly  called the \emph{twisted Dolbeault cohomologies}.

\subsection{Double complexes $\mathcal{S}^{\bullet,\bullet}_{X}(\mathcal{L},s,t)$ and $\mathcal{T}^{\bullet,\bullet}_{X}(\mathcal{L},s,t)$}
In \cite[Section 4.2]{CY}, Chen, Y. and Yang, S. introduced the complexes $\Omega_X^{[s,t]}(\mathcal{L})$  for $0\leq s\leq t\leq n$  on a compact complex manifold $X$.  For convenience, we generalize this concept a little.

\emph{The compactness of $X$ is not assumed in this paper}. Set  $\mathcal{L}\otimes\Omega_X^k=0$ for $k<0$ or $>n$.
Given any integers $s$ and $t$, the \emph{truncated twisted holomorphic de Rham complex} $\Omega_X^{[s,t]}(\mathcal{L})$ is defined as the zero complex if $s> t$ and as the complex
\begin{equation}\label{truncated twisted}
\xymatrix{
0\ar[r] &\mathcal{L}\otimes\Omega_X^s\ar[r]^{\partial} &\mathcal{L}\otimes\Omega_X^{s+1}\ar[r]^{ \quad\partial}&\cdots\ar[r]^{\partial\quad}&\mathcal{L}\otimes\Omega_X^t\ar[r]&0
}
\end{equation}
if $s\leq t$, where $\mathcal{L}\otimes\Omega_X^k$ is placed in degree $k$ for $s\leq k\leq t$ and zeros are placed in other degrees.
Obviously, $\Omega_X^{[s,t]}(\mathcal{L})$ is equal to $\Omega_X^{[0,t]}(\mathcal{L})$ for $s<0$ and equal to $\Omega_X^{[s,n]}(\mathcal{L})$ for $t>n$. In particular, $\Omega_X^{[0,n]}(\mathcal{L})=\mathcal{L}\otimes\Omega_X^{\bullet}$ is the twisted holomorphic de Rham complex on $X$ and $\Omega_X^{[p,p]}(\mathcal{L})=(\mathcal{L}\otimes\Omega_X^{p\bullet})[-p]$, where $\Omega_X^{p\bullet}$ denote the complex with $\Omega_X^{p}$ in degree $0$ and zeros in other degrees.


Set
\begin{displaymath}
\mathcal{S}^{p,q}_{X}(\mathcal{L},s,t)=\left\{
 \begin{array}{ll}
\mathcal{L}\otimes\mathcal{A}^{p,q}_{X},&~s\leq p\leq t\\
 &\\
 0,&~\textrm{others}.
 \end{array}
 \right.
\end{displaymath}
Then $(\mathcal{S}^{\bullet,\bullet}_{X}(\mathcal{L},s,t),d_1,d_2)$ is a double complex of sheaves, where
\begin{displaymath}
d^{p,q}_1=\left\{
 \begin{array}{ll}
\partial,&~s\leq p< t\\
 &\\
 0,&~\textrm{others}
 \end{array}
 \right.
\textrm{and}\quad
d^{p,q}_2=\left\{
 \begin{array}{ll}
\bar{\partial},&~s\leq p\leq t\\
 &\\
 0,&~\textrm{others}.
 \end{array}
 \right.
\end{displaymath}
Denoted it by $\mathcal{S}^{\bullet,\bullet}_{X}(\mathcal{L},s,t)$ shortly.
Let $\mathcal{S}^{\bullet}_{X}(\mathcal{L},s,t)=s\mathcal{S}^{\bullet,\bullet}_{X}(\mathcal{L},s,t)$  be the complex associated to $\mathcal{S}^{\bullet,\bullet}_{X}(\mathcal{L},s,t)$.
For instance, $\mathcal{S}^{\bullet,\bullet}_{X}(\mathcal{L},0,n)=\mathcal{L}\otimes\mathcal{A}_X^{\bullet,\bullet}$, $\mathcal{S}^{\bullet}_{X}(\mathcal{L},0,n)=\mathcal{L}\otimes\mathcal{A}_{X}^{\bullet}$ and $\mathcal{S}^{\bullet}_{X}(\mathcal{L},p,p)=\mathcal{L}\otimes\mathcal{A}_X^{p,\bullet}[-p]$.
\begin{lem}\label{quasi-isom}
The inclusion gives a quasi-isomorphism $\Omega_X^{[s,t]}(\mathcal{L})\rightarrow \mathcal{S}^{\bullet}_{X}(\mathcal{L},s,t)$ of complexes of sheaves.
\end{lem}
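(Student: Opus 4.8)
The plan is to realize the inclusion as the morphism on total complexes induced by a morphism of double complexes, and then to compare the two associated spectral sequences. Regard $\Omega_X^{[s,t]}(\mathcal{L})$ as a double complex $\Omega^{\bullet,\bullet}$ concentrated in the row $q=0$, with $\mathcal{L}\otimes\Omega_X^p$ in bidegree $(p,0)$ for $s\leq p\leq t$, horizontal differential $\partial$ and vanishing vertical differential; its associated total complex is $\Omega_X^{[s,t]}(\mathcal{L})$ itself. The column-wise inclusions $\mathcal{L}\otimes\Omega_X^p\hookrightarrow\mathcal{L}\otimes\mathcal{A}_X^{p,0}$ assemble into a morphism of double complexes $\phi:\Omega^{\bullet,\bullet}\to\mathcal{S}_X^{\bullet,\bullet}(\mathcal{L},s,t)$, and $s\phi$ is exactly the inclusion $\Omega_X^{[s,t]}(\mathcal{L})\to\mathcal{S}_X^{\bullet}(\mathcal{L},s,t)$ of the statement. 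Since a morphism of complexes of sheaves is a quasi-isomorphism precisely when it induces isomorphisms on all cohomology sheaves, and both double complexes are supported in the bounded region $s\leq p\leq t$, $0\leq q\leq n$, I would prove the claim by showing that $\phi$ induces an isomorphism on the $E_1$-pages of the spectral sequences (in the convention of Section 2.1, where $_KE^{p,q}_1=H^q(K^{p,\bullet})$, i.e. the vertical $\bar{\partial}$-cohomology is taken first) and then invoking the comparison theorem.

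First I would compute the two $E_1$-pages, understood as cohomology sheaves. For the source, the columns of $\Omega^{\bullet,\bullet}$ are concentrated in degree $0$, so $_{\Omega}E_1^{p,0}=\mathcal{L}\otimes\Omega_X^p$ for $s\leq p\leq t$ and $_{\Omega}E_1^{p,q}=0$ otherwise. For the target, the $p$-th column is $\mathcal{S}_X^{p,\bullet}(\mathcal{L},s,t)=\mathcal{L}\otimes\mathcal{A}_X^{p,\bullet}$ with vertical differential $\bar{\partial}$, which is precisely the fine (twisted Dolbeault) resolution of $\mathcal{L}\otimes\Omega_X^p$ recalled in Section 3.1; hence its cohomology sheaves vanish in positive degree and equal $\ker\bar{\partial}=\mathcal{L}\otimes\Omega_X^p$ in degree $0$. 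Thus $_{\mathcal{S}}E_1^{p,0}=\mathcal{L}\otimes\Omega_X^p$ and $_{\mathcal{S}}E_1^{p,q}=0$ for $q\neq0$, and the map induced by $\phi$ on these $(p,0)$-entries is the identity of $\mathcal{L}\otimes\Omega_X^p$. Consequently $\phi$ is an isomorphism on $E_1$.

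To conclude, I note that both $E_1$-pages are concentrated in the single row $q=0$, with horizontal differential induced by $\partial$; the spectral sequences therefore degenerate at $E_2$, and, the double complexes being bounded, they converge to the cohomology sheaves of their total complexes. By the comparison theorem for spectral sequences, an isomorphism on $E_1$ forces an isomorphism on the abutments, so $s\phi$ induces isomorphisms on all cohomology sheaves; that is, the inclusion $\Omega_X^{[s,t]}(\mathcal{L})\to\mathcal{S}_X^{\bullet}(\mathcal{L},s,t)$ is a quasi-isomorphism. The only genuinely analytic input is the exactness of the columns, i.e. the twisted $\bar{\partial}$-Poincar\'{e} lemma; because $\mathcal{L}$ is locally constant this is a purely local statement that reduces to the classical Dolbeault-Grothendieck lemma, so it presents no real obstacle. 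The one point demanding care is the bookkeeping at the level of cohomology sheaves rather than sections, together with the verification of convergence, both of which are handled by the boundedness of the bidegree range.
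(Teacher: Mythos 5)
Your proof is correct and follows essentially the same route as the paper: both arguments reduce the claim to the fact that each column $\mathcal{S}^{p,\bullet}_X(\mathcal{L},s,t)=\mathcal{L}\otimes\mathcal{A}_X^{p,\bullet}$ is a resolution of $\mathcal{L}\otimes\Omega_X^p$ (the twisted Dolbeault resolution from Section 3.1). The only difference is that the paper then cites \cite[Lemma 8.5]{V}, whereas you supply the standard spectral-sequence proof of that lemma (column filtration, isomorphism on $E_1$, convergence by boundedness).
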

\begin{proof}
For any $p\in\mathbb{Z}$, $\Omega_X^{[s,t]}(\mathcal{L})^p\rightarrow (\mathcal{S}^{p,\bullet}_{X}(\mathcal{L},s,t),d^{p,\bullet}_2)$ given by the inclusion is a resolution of $\Omega_X^{[s,t]}(\mathcal{L})^p$.
By \cite[Lemma 8.5]{V}, the lemma holds.
\end{proof}

Set $S^{p,q}(X,\mathcal{L},s,t)=\Gamma(X,\mathcal{S}^{p,q}_{X}(\mathcal{L},s,t))$ and $S^{p}(X,\mathcal{L},s,t)=\Gamma(X,\mathcal{S}^{p}_{X}(\mathcal{L},s,t))$. In particular, the double complex $S^{\bullet,\bullet}(X,\mathcal{L},0,n)$ is just $(\Gamma(X,\mathcal{L}\otimes \mathcal{A}_X^{\bullet,\bullet}),\partial,\bar{\partial})$ (see also \cite[Section 2.1]{CY} or \cite[Section 8.2.1]{V}).
For any $p\in\mathbb{Z}$, $\mathcal{S}^{p}_{X}(\mathcal{L},s,t)$ is $\Gamma$-acyclic, since it is a fine sheaf.
By \cite[Proposition 8.12]{V} and Lemma \ref{quasi-isom},
\begin{equation}\label{computation1}
\mathbb{H}^k(X,\Omega_X^{[s,t]}(\mathcal{L}))\cong H^k(S^{\bullet}(X,\mathcal{L},s,t))
\end{equation}
for any $k\in\mathbb{Z}$.
For example, $\mathbb{H}^k(X,\Omega_X^{[0,n]}(\mathcal{L}))\cong H_{dR}^k(X,\mathcal{L})$ and $\mathbb{H}^k(X,\Omega_X^{[p,p]}(\mathcal{L}))\cong H^{k-p}(X,\mathcal{L}\otimes\Omega^p_X)$.

Similarly, we can define $\mathcal{T}^{\bullet,\bullet}_{X}(\mathcal{L},s,t)$, $\mathcal{T}^{\bullet}_{X}(\mathcal{L},s,t)$, $T^{\bullet,\bullet}(X,\mathcal{L},s,t)$ and $T^{\bullet}(X,\mathcal{L},s,t)$, where
\begin{displaymath}
\mathcal{T}^{p,q}_{X}(\mathcal{L},s,t)=\left\{
 \begin{array}{ll}
\mathcal{L}\otimes\mathcal{D}^{\prime p,q}_{X},&~s\leq p\leq t\\
 &\\
 0,&~\textrm{others}.
 \end{array}
 \right.
\end{displaymath}
There is an isomorphism
\begin{equation}\label{computation2}
\mathbb{H}^k(X,\Omega_X^{[s,t]}(\mathcal{L}))\cong H^k(T^{\bullet}(X,\mathcal{L},s,t))
\end{equation}
for any $k\in\mathbb{Z}$.

\emph{Notice that}, all double complexes defined in this section are bounded.

\subsection{Exact sequences}
For integers $s\leq s'\leq t\leq t'$, there is a natural exact sequence
\begin{displaymath}
0\rightarrow S^{\bullet,\bullet}(X,\mathcal{L},t+1,t')\rightarrow S^{\bullet,\bullet}(X,\mathcal{L},s',t')\rightarrow S^{\bullet,\bullet}(X,\mathcal{L},s,t)\rightarrow S^{\bullet,\bullet}(X,\mathcal{L},s,s'-1)\rightarrow 0,
\end{displaymath}
where the morphism is the identity or zero at every degree.
In particular, we have a short exact sequence
\begin{equation}\label{exact}
0\rightarrow S^{\bullet,\bullet}(X,\mathcal{L},s,t)\rightarrow S^{\bullet,\bullet}(X,\mathcal{L},r,t)\rightarrow S^{\bullet,\bullet}(X,\mathcal{L},r,s-1)\rightarrow 0,
\end{equation}
for integers $r\leq s\leq t$. These exact sequences also hold for $\Omega_X^{[s,t]}$,
$S^{\bullet}(X,\mathcal{L},s,t)$,
$T^{\bullet}(X,\mathcal{L},s,t)$, etc.
By (\ref{computation1}) and (\ref{exact}), there is a long exact sequence
\begin{displaymath}
\tiny{\xymatrix{
\cdots\ar[r] &\mathbb{H}^{k-1}(X,\Omega_X^{[r,s-1]}(\mathcal{L}))\ar[r]^{} &\mathbb{H}^k(X,\Omega_X^{[s,t]}(\mathcal{L}))\ar[r]^{}&\mathbb{H}^k(X,\Omega_X^{[r,t]}(\mathcal{L}))\ar[r]^{}&\mathbb{H}^k(X,\Omega_X^{[r,s-1]}(\mathcal{L}))\ar[r]^{}&\cdots
}}.
\end{displaymath}

\subsection{Operations}
Suppose that $X$ is a  complex manifold and $\mathcal{L}$, $\mathcal{H}$ are local systems of $\underline{\mathbb{C}}_X$-modules of rank $l$, $h$ on $X$ respectively.

For $\alpha\in\Gamma(X,\mathcal{L}\otimes\mathcal{A}_X^{p})$ and $\beta\in\Gamma(X,\mathcal{H}\otimes\mathcal{A}_X^{q})$, we define the wedge product  $\alpha\wedge\beta\in\Gamma(X,\mathcal{L}\otimes\mathcal{H}\otimes\mathcal{A}_X^{p+q})$ as follows: Let $U$ be an open subset of $X$ such that $\mathcal{L}|_U$ and $\mathcal{H}|_U$ are trivial. Let $e_1$, $\ldots$, $e_l$ and $f_1$, $\ldots$, $f_h$ be bases of $\Gamma(U,\mathcal{L})$ and $\Gamma(U, \mathcal{H})$, respectively.
Suppose that $\alpha=\sum\limits_{i=0}^{l}e_i\otimes \alpha_i$ and $\beta=\sum\limits_{j=1}^{h}f_j\otimes \beta_i$ on $U$ respectively, where $\alpha_i\in\mathcal{A}^{p}(U)$ for $1\leq i\leq l$ and $\beta_i\in\mathcal{A}^{q}(U)$ for $1\leq j\leq h$ . Then $\alpha\wedge\beta$ on $U$ is defined as
\begin{displaymath}
\sum\limits_{\substack{1\leq i\leq l\\1\leq j\leq h}} e_i\otimes f_j\otimes(\alpha_i\wedge\beta_j).
\end{displaymath}
This construction is global. We have
\begin{displaymath}
d(\alpha\wedge\beta)=d\alpha\wedge\beta+(-1)^p\alpha\wedge d\beta,
\end{displaymath}
which also holds if using $\partial$ or $\bar{\partial}$ instead of $d$.

Suppose that $\alpha\in\Gamma(X,\mathcal{H}\otimes \mathcal{A}_X^{p,q})$ satisfies that $\partial\alpha=\bar{\partial}\alpha=0$, i.e., $d\alpha=0$. Then wedge products by $\alpha$ give a morphism 
\begin{displaymath}
\bullet\wedge\alpha:S^{\bullet,\bullet}(X,\mathcal{L},s,t)\rightarrow S^{\bullet,\bullet}(X,\mathcal{L}\otimes\mathcal{H},s+p,t+p)[p,q]
\end{displaymath}
of double complexes of sheaves. This  define a \emph{cup product}
\begin{displaymath}
\cup:\mathbb{H}^{k}(X,\Omega_X^{[s,t]}(\mathcal{L}))\times H^{p,q}_{BC}(X,\mathcal{H})\rightarrow \mathbb{H}^{k+p+q}(X,\Omega_X^{[s+p,t+p]}(\mathcal{L}\otimes \mathcal{H}))
\end{displaymath}
for any $k$, where
\begin{displaymath}
H^{p,q}_{BC}(X,\mathcal{H}):= \frac{Ker(d: \Gamma(X,\mathcal{H}\otimes\mathcal{A}_X^{p,q})\rightarrow \Gamma(X,\mathcal{H}\otimes\mathcal{A}_X^{p+q+1}))}{\partial\overline{\partial}\Gamma(X,\mathcal{H}\otimes\mathcal{A}_X^{p-1,q-1})}
\end{displaymath}
$H^{p,q}_{BC}(X,\mathcal{H})$ is \emph{the twisted Bott-Chern cohomology} of $X$.
We can obtain the same cup product by
\begin{displaymath}
\bullet\wedge\alpha:T^{\bullet,\bullet}(X,\mathcal{L},s,t)\rightarrow T^{\bullet,\bullet}(X,\mathcal{L}\otimes\mathcal{H},s+p,t+p)[p,q].
\end{displaymath}
Let $f:Y\rightarrow X$ be a holomorphic map between complex manifolds. The pullbacks give a morphism
\begin{displaymath}
f^*:S^{\bullet,\bullet}(X,\mathcal{L},s,t)\rightarrow S^{\bullet,\bullet}(Y,f^{-1}\mathcal{L},s,t)
\end{displaymath}
of double complexes, which induces a morphism
$f^*:\mathbb{H}^{k}(X,\Omega_X^{[s,t]}(\mathcal{L}))\rightarrow \mathbb{H}^{k}(Y,\Omega_Y^{[s,t]}(f^{-1}\mathcal{L}))$  for any $k$.
Moreover, if $f$ is proper and $r=\textrm{dim}_{\mathbb{C}}Y-\textrm{dim}_{\mathbb{C}}X$, then the pushforwards give a morphism
\begin{displaymath}
f_*:T^{\bullet,\bullet}(Y,f^{-1}\mathcal{L},s,t)\rightarrow T^{\bullet,\bullet}(X,\mathcal{L},s-r,t-r)[-r,-r]
\end{displaymath}
of double complexes, which induces a morphism
$f_*:\mathbb{H}^{k}(Y,\Omega_Y^{[s,t]}(f^{-1}\mathcal{L}))\rightarrow \mathbb{H}^{k-2r}(X,\Omega_X^{[s-r,t-r]}(\mathcal{L}))$ for any $k$.
Similarly, these operations can also be defined on complexes $S^{\bullet}(X,\mathcal{L},s,t)$ and $T^{\bullet}(X,\mathcal{L},s,t)$. By \cite[(3.7)]{M4}, we easily get
\begin{prop}[Projection formula]
Let $f:Y\rightarrow X$ be a proper  holomorphic map between complex manifolds and $\mathcal{L}$, $\mathcal{H}$  local systems of $\underline{\mathbb{C}}_X$-modules of finite rank on $X$. Fix integers $k$, $p$, $q$, $s$, $t$. Then
\begin{displaymath}
f_*(f^*\alpha\cup\beta)=\alpha\cup f_*\beta
\end{displaymath}
for any $\alpha\in \mathbb{H}^k(X,\Omega_X^{[s,t]}(\mathcal{L}))$ and $\beta\in H^{p,q}_{BC}(Y,f^{-1}\mathcal{H})$.
\end{prop}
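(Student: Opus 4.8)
The plan is to reduce the identity to the chain-level projection formula for currents and then pass to cohomology via the isomorphisms (\ref{computation1}) and (\ref{computation2}). Since $f_*$ is defined on the current complexes $T^{\bullet,\bullet}$ while $f^*$ and $\cup$ are most naturally written on the form complexes $S^{\bullet,\bullet}$, I would realize both sides of the asserted equality as the class of an explicit current on $X$ and compare them at the chain level, where everything is linear and local.

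First I would fix representatives. Represent $\alpha$ by a smooth form $a\in S^{\bullet}(X,\mathcal{L},s,t)$ that is a cocycle for the total differential, using (\ref{computation1}), and represent $\beta$ by a $d$-closed smooth form $b\in\Gamma(Y,f^{-1}\mathcal{H}\otimes\mathcal{A}_Y^{p,q})$, using the definition of $H^{p,q}_{BC}$. For the left-hand side, $f^*\alpha$ is represented by $f^*a$ on $Y$, the cup product $f^*\alpha\cup\beta$ by $f^*a\wedge b$, and applying $f_*$ (after viewing the smooth form $f^*a\wedge b$ as an element of $T^{\bullet}(Y,f^{-1}(\mathcal{L}\otimes\mathcal{H}),s+p,t+p)$) yields the current $f_*(f^*a\wedge b)$ on $X$. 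For the right-hand side, $f_*\beta$ is represented by the pushforward current $f_*b$, of bidegree $(p-r,q-r)$ on $X$, and $\alpha\cup f_*\beta$ by the current $a\wedge f_*b$, obtained by wedging the smooth form $a$ with the current $f_*b$. A direct count shows both sides live in $T^{\bullet}(X,\mathcal{L}\otimes\mathcal{H},s+p-r,t+p-r)$ in total degree $k+p+q-2r$, as required.

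The crux is then the single chain-level identity $f_*(f^*a\wedge b)=\pm\,a\wedge f_*b$ of currents on $X$. This is exactly the projection formula for currents: pairing both sides with a test form $\phi$ on $X$ and using the defining duality $\langle f_*T,\phi\rangle=\langle T,f^*\phi\rangle$ together with $f^*a\wedge f^*\phi=f^*(a\wedge\phi)$ reduces it to a tautology. In the twisted setting this is precisely \cite[(3.7)]{M4}, which I would simply invoke. Since $a$, $b$, $f^*a\wedge b$ and $f_*b$ are all cocycles of the appropriate bidegrees, passing to classes via (\ref{computation2}) upgrades this equality to $f_*(f^*\alpha\cup\beta)=\alpha\cup f_*\beta$.

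The main obstacle is bookkeeping rather than substance. I expect the genuine care to lie in two places: first, checking that the ``mixed'' product $a\wedge f_*b$ — a smooth form wedged with a current — really does compute $\alpha\cup f_*\beta$, i.e. that the current $f_*b$ represents the Bott--Chern class $f_*\beta$ and that wedging it with the smooth representative $a$ is compatible with the cup product defined on $T^{\bullet,\bullet}$; and second, tracking the signs coming from the shifts $[p,q]$ and $[-r,-r]$ built into the definitions of $\cup$ and $f_*$, together with the sign in the current-level projection formula. The $\underline{\mathbb{C}}_X$-linearity of $\partial$, $\bar\partial$, the wedge product and $f_*$ guarantees that the twist by $\mathcal{L}$ and $\mathcal{H}$ merely carries the coefficients along, so the local systems introduce no additional difficulty.
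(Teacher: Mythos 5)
Your proposal is correct and follows essentially the same route as the paper, which gives no written proof at all but simply invokes the chain-level projection formula \cite[(3.7)]{M4} and notes that the statement "easily" follows; your reduction to the current-level identity $f_*(f^*a\wedge b)=a\wedge f_*b$ and the subsequent passage to cohomology via (\ref{computation1}) and (\ref{computation2}) is exactly the intended argument. The two bookkeeping points you flag (the mixed smooth-form/current wedge representing $\alpha\cup f_*\beta$, and the degree shifts) are real but routine, and your handling of them is consistent with the paper's definitions of $\cup$ and $f_*$ on $T^{\bullet,\bullet}$.
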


Recall that a complex manifold $X$ is called \emph{$p$-K\"ahlerian}, if it admits a closed transverse positive $(p,p)$-form $\Omega$ (see \cite[Definition 1.1, 1.3]{AB}). In such case, $\Omega|_{Z}$ is a volume form on $Z$, for any complex submanifold $Z$ of pure dimension $p$ of $X$. Any complex manifold is $0$-K\"ahlerian and any K\"ahler manifold $X$ is  $p$-K\"ahlerian for every $0\leq p\leq \textrm{dim}_{\mathbb{C}}X$. We generalize  \cite[Theorem 3.1(a)(b), 4.1(a)(b)]{W}\cite[Propositions 2.3, 3.2]{M2}\cite[Proposition 3.10]{M4} as follows.

\begin{prop}\label{inj-surj}
Suppose that $f:Y\rightarrow X$ is a proper surjective holomorphic map between complex manifolds and $Y$ is $r$-K\"ahlerian, where $r=\emph{dim}_{\mathbb{C}}Y-\emph{dim}_{\mathbb{C}}X$. Let $\mathcal{L}$ be a local system of $\underline{\mathbb{C}}_X$-modules of finite rank on $X$.  Then, for any $k$, $s$, $t$,

$(1)$ $f^*:\mathbb{H}^{k}(X,\Omega_X^{[s,t]}(\mathcal{L}))\rightarrow \mathbb{H}^{k}(Y,\Omega_Y^{[s,t]}(f^{-1}\mathcal{L}))$ is injective,

$(2)$ $f_*:\mathbb{H}^{k}(Y,\Omega_Y^{[s,t]}(f^{-1}\mathcal{L}))\rightarrow \mathbb{H}^{k-2r}(X,\Omega_X^{[s-r,t-r]}(\mathcal{L}))$  is surjective.
\end{prop}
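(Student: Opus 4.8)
The plan is to exploit the projection formula together with the closed transverse positive $(r,r)$-form provided by the $r$-K\"ahler hypothesis. Let $\Omega$ be such a form on $Y$; since $d\Omega=0$, it determines a twisted Bott--Chern class $[\Omega]_{BC}\in H^{r,r}_{BC}(Y,\underline{\mathbb{C}}_Y)$. I would first record the elementary identification $H^{0,0}_{BC}(X,\underline{\mathbb{C}}_X)\cong\mathbb{C}$: the numerator $\ker\bigl(d:\Gamma(X,\mathcal{A}_X^{0,0})\to\Gamma(X,\mathcal{A}_X^{1})\bigr)$ consists of the constants since $X$ is connected, while the denominator $\partial\bar\partial\,\Gamma(X,\mathcal{A}_X^{-1,-1})$ vanishes; under this isomorphism the unit corresponds to the constant function $1$.

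The heart of the argument is the claim that $f_*[\Omega]_{BC}=c\cdot 1$ for some real $c>0$. Realizing $f_*$ on the current complex $T^{\bullet,\bullet}$, the pushforward $f_*\Omega$ is the $(0,0)$-current on $X$ obtained by fiber integration, $\langle f_*\Omega,\phi\rangle=\int_Y\Omega\wedge f^*\phi$. Because $\Omega$ is closed, so is $f_*\Omega$, and a $d$-closed degree-zero current on the connected manifold $X$ is a constant. Strict positivity of this constant is precisely where transversality and positivity enter: over the open dense locus where $f$ is a submersion, $f_*\Omega$ is represented by the function $x\mapsto\int_{f^{-1}(x)}\Omega|_{f^{-1}(x)}$, and by the very definition of transverse positivity $\Omega|_{f^{-1}(x)}$ is a volume form on the $r$-dimensional fiber, so the integral is positive; hence $c>0$.

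Both statements then follow from the projection formula $f_*(f^*\alpha\cup\beta)=\alpha\cup f_*\beta$ applied with $\beta=[\Omega]_{BC}$. For $(1)$, given $\alpha\in\mathbb{H}^{k}(X,\Omega_X^{[s,t]}(\mathcal{L}))$, the product $f^*\alpha\cup[\Omega]_{BC}$ lies in $\mathbb{H}^{k+2r}(Y,\Omega_Y^{[s+r,t+r]}(f^{-1}\mathcal{L}))$ and
\[
f_*(f^*\alpha\cup[\Omega]_{BC})=\alpha\cup f_*[\Omega]_{BC}=c\,\alpha ;
\]
if $f^*\alpha=0$ then $c\alpha=0$, whence $\alpha=0$, so $f^*$ is injective. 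For $(2)$, given $\beta\in\mathbb{H}^{k-2r}(X,\Omega_X^{[s-r,t-r]}(\mathcal{L}))$, the same computation (with the indices shifted by $r$) shows that $\tfrac1c\,f^*\beta\cup[\Omega]_{BC}\in\mathbb{H}^{k}(Y,\Omega_Y^{[s,t]}(f^{-1}\mathcal{L}))$ is sent by $f_*$ to $\beta$, exhibiting an explicit preimage, so $f_*$ is surjective.

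The step requiring the most care is the middle one, namely that $f_*[\Omega]_{BC}$ is a nonzero constant. One must verify that the fiber-integration picture is compatible with the current-level pushforward used to define $f_*$, so that the equality $f_*[\Omega]_{BC}=c\cdot 1$ genuinely holds in $H^{0,0}_{BC}(X,\underline{\mathbb{C}}_X)$ rather than merely in ordinary cohomology, and that the singular fibers, forming a proper analytic subset of $X$, do not disturb the conclusion that the closed $(0,0)$-current is the positive constant determined by the generic fiber volume. Properness of $f$ guarantees $f_*$ is defined, and surjectivity guarantees the fibers are nonempty of the expected dimension, so that $c$ is indeed strictly positive.
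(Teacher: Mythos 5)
Your proof is correct and follows essentially the same route as the paper: both take the closed transverse positive $(r,r)$-form $\Omega$, observe that $f_*\Omega$ is a closed degree-zero current and hence a constant $c$, evaluate $c>0$ on a generic (regular) fiber, and then deduce both injectivity of $f^*$ and surjectivity of $f_*$ from the projection formula $f_*(f^*\alpha\cup[\Omega]_{BC})=c\,\alpha$. The only cosmetic difference is that the paper invokes Sard's theorem explicitly to produce the regular value, while you phrase the same point via the open dense submersion locus.
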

\begin{proof}
Let $\Omega$ be a strictly positive closed $(r,r)$-form on $Y$.  Then $c=f_*\Omega$ is a closed  current of degree $0$, hence a constant. By Sard's theorem, the set $X_0$ of regular values of $f$ is nonempty. For any $x\in X_0$, $Y_x=f^{-1}(x)$ is a compact complex submanifold of pure dimension $r$, so $c=\int_{Y_x}\Omega|_{Y_x}>0$. By the projection formula, $f_*(f^*\alpha\cup [\Omega]_{BC})=c\cdot \alpha$ for any $\alpha\in \mathbb{H}^{k}(X,\Omega_X^{[s,t]}(\mathcal{L}))$, where $[\Omega]_{BC}\in H_{BC}^{r,r}(X)$ denotes the Bott-Chern class of $\Omega$. It is easy to deduce the proposition.
\end{proof}

\begin{cor}
Let $f:Y\rightarrow X$ be a proper surjective holomorphic map between  complex manifolds with the same dimensions and $\mathcal{L}$ a local system of $\underline{\mathbb{C}}_X$-modules of finite rank on $X$.  Then, for any $k$, $s$, $t$

$(1)$ $f^*:\mathbb{H}^{k}(X,\Omega_X^{[s,t]}(\mathcal{L}))\rightarrow \mathbb{H}^{k}(Y,\Omega_Y^{[s,t]}(f^{-1}\mathcal{L}))$ is injective,

$(2)$ $f_*:\mathbb{H}^{k}(Y,\Omega_Y^{[s,t]}(f^{-1}\mathcal{L}))\rightarrow \mathbb{H}^{k}(X,\Omega_X^{[s,t]}(\mathcal{L}))$  is surjective.
\end{cor}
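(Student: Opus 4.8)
The plan is to derive the statement as an immediate specialization of Proposition \ref{inj-surj} to the case $r=0$. First I would observe that the hypothesis $\dim_{\mathbb{C}}Y=\dim_{\mathbb{C}}X$ forces $r=\dim_{\mathbb{C}}Y-\dim_{\mathbb{C}}X=0$. As noted in the discussion preceding Proposition \ref{inj-surj}, every complex manifold is $0$-K\"ahlerian (indeed, the constant function $1$ serves as a closed transverse positive $(0,0)$-form), so $Y$ is automatically $r$-K\"ahlerian with $r=0$. Hence all the hypotheses of Proposition \ref{inj-surj} are met by $f$ and $\mathcal{L}$, and the proper surjective map $f$ needs no further restriction.

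Substituting $r=0$ into the two conclusions of Proposition \ref{inj-surj} then yields the corollary directly. Part (1) is unchanged and gives the injectivity of $f^*:\mathbb{H}^{k}(X,\Omega_X^{[s,t]}(\mathcal{L}))\rightarrow \mathbb{H}^{k}(Y,\Omega_Y^{[s,t]}(f^{-1}\mathcal{L}))$. In part (2), the target $\mathbb{H}^{k-2r}(X,\Omega_X^{[s-r,t-r]}(\mathcal{L}))$ collapses to $\mathbb{H}^{k}(X,\Omega_X^{[s,t]}(\mathcal{L}))$ once $r=0$, so the surjectivity of $f_*$ follows at once. There is no substantive obstacle in this argument: the sole point to record is that the $r$-K\"ahlerian hypothesis becomes vacuous when $r=0$, which is immediate from the definition, so the corollary is purely a reading-off of Proposition \ref{inj-surj} in the equidimensional situation.
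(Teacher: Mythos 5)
Your proposal is correct and matches the paper's intent exactly: the corollary is stated without proof immediately after Proposition \ref{inj-surj} precisely because it is the specialization $r=0$, with the $0$-K\"ahlerian hypothesis vacuous since (as the paper notes) every complex manifold is $0$-K\"ahlerian. Nothing further is needed.
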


\subsection{A spectral sequence}
Associated to $S^{\bullet,\bullet}(X,\mathcal{L},s,t)$, there is a spectral sequence
\begin{equation}\label{spectral sequence}
E_1^{p,q}(X,\mathcal{L},s,t)\Rightarrow \mathbb{H}^{p+q}(X,\Omega_X^{[s,t]}(\mathcal{L})),
\end{equation}
where
\begin{equation}\label{ss1}
E_1^{p,q}(X,\mathcal{L},s,t)=H^q(S^{p,\bullet}(X,\mathcal{L},s,t))=\left\{
 \begin{array}{ll}
H^q(X,\mathcal{L}\otimes\Omega_X^p),&~s\leq p\leq t\\
 &\\
 0,&~\textrm{others}.
 \end{array}
 \right.
\end{equation}
It coincides with that associated to $T^{\bullet,\bullet}(X,\mathcal{L},s,t)$ via the natural inclusion $S^{\bullet,\bullet}(X,\mathcal{L},s,t)\rightarrow T^{\bullet,\bullet}(X,\mathcal{L},s,t)$. We call it the \emph{truncated twisted Fr\"{o}licher spectral sequence} for $(X,\mathcal{L},s,t)$. The truncated twisted Fr\"{o}licher spectral sequence for $(X,\underline{\mathbb{C}}_X,0,n)$ is just the classical Fr\"{o}licher (or Hodge-de Rham) spectral sequence.

If $X$ is \emph{compact}, one has an inequality
\begin{equation}\label{inequality}
b^k(X,\Omega_X^{[s,t]}(\mathcal{L}))\leq\sum\limits_{\substack{p+q=k\\s\leq p\leq t}}h^{p,q}(X,\mathcal{L})
\end{equation}
for any $k$, where $b^k(X,\Omega_X^{[s,t]}(\mathcal{L}))=\textrm{dim}_{\mathbb{C}}\mathbb{H}^{k}(X,\Omega_X^{[s,t]}(\mathcal{L}))$ and $h^{p,q}(X,\mathcal{L})=\textrm{dim}_{\mathbb{C}}H^q(X,\mathcal{L}\otimes\Omega_X^p)$. The spectral sequence (\ref{spectral sequence}) degenerates at $E_1$ if and only if the equalities (\ref{inequality}) hold for all $k$.

\section{Proofs of main theorems}

\subsection{Leray-Hirsch theorem}
A proof of Theorem \ref{L-H} is given as follows.
\begin{proof}
Fix two integers $s$ and $t$.
Set
\begin{displaymath}
K^{\bullet,\bullet}=\bigoplus\limits_{i=1}^{r}S^{\bullet,\bullet}(X,\mathcal{L},s-u_i,t-u_i)[-u_i,-v_i]
\end{displaymath}
and $L^{\bullet,\bullet}=S^{\bullet,\bullet}(E,\pi^{-1}\mathcal{L},s,t)$.
By (\ref{ss1}), we get the first pages
\begin{displaymath}
\begin{aligned}
_{K}E_1^{p,q}=&\bigoplus\limits_{i=1}^{r}H^{q-v_i}(S^{p-u_i,\bullet}(X,\mathcal{L},s-u_i,t-u_i))\\
=&\left\{
 \begin{array}{ll}
\bigoplus\limits_{i=1}^{r}H^{q-v_i}(X,\mathcal{L}\otimes\Omega_X^{p-u_i}),&~s\leq p\leq t\\
 &\\
 0,&~\textrm{others}
 \end{array}
 \right.
\end{aligned}
\end{displaymath}
and
\begin{displaymath}
_{L}E_1^{p,q}=\left\{
 \begin{array}{ll}
H^{q}(E,\pi^{-1}\mathcal{L}\otimes\Omega_E^p),&~s\leq p\leq t\\
 &\\
 0,&~\textrm{others}
 \end{array}
 \right.
\end{displaymath}
of the spectral sequences associated to $K^{\bullet,\bullet}$ and $L^{\bullet,\bullet}$ respectively.
By \cite[Theorem 5.6 (2)]{M4}, the morphism $\sum\limits_{i=1}^{r}\pi^*(\bullet)\wedge t_i:K^{\bullet,\bullet}\rightarrow L^{\bullet,\bullet}$
of double complexes induces an isomorphism  $_KE_1^{\bullet,\bullet}\rightarrow\mbox{ }  _LE_1^{\bullet,\bullet}$  at $E_1$-pages, hence induces an isomorphism $H^{k}(sK^{\bullet,\bullet})\rightarrow H^{k}(sL^{\bullet,\bullet})$ for any $k$.
Notice that
\begin{displaymath}
\begin{aligned}
sK^{\bullet,\bullet}=&\bigoplus_{i=1}^{r}s\left(S^{\bullet,\bullet}(X,\mathcal{L},s-u_i,t-u_i)[-u_i,-v_i]\right)\\
=& \bigoplus\limits_{i=1}^{r}S^{\bullet}(X,\mathcal{L},s-u_i,t-u_i)[-u_i-v_i]
\end{aligned}
\end{displaymath}
and $sL^{\bullet,\bullet}=S^{\bullet}(E,\pi^{-1}\mathcal{L},s,t)$.
By (\ref{computation1}),
\begin{displaymath}
\begin{aligned}
H^{k}(sK^{\bullet,\bullet})=&\bigoplus_{i=1}^{r}H^{k-u_i-v_i}(S^{\bullet}(X,\mathcal{L},s-u_i,t-u_i))\\
\cong& \bigoplus_{i=1}^{r}\mathbb{H}^{k-u_i-v_i}(X,\Omega_X^{[s-u_i,t-u_i]}(\mathcal{L}))
\end{aligned}
\end{displaymath}
and
\begin{displaymath}
H^{k}(sL^{\bullet,\bullet})=H^k(S^{\bullet}(E,\pi^{-1}\mathcal{L},s,t))\cong \mathbb{H}^{k}(E,\Omega_{E}^{[s,t]}(\pi^{-1}\mathcal{L})).
\end{displaymath}
We complete the proof.
\end{proof}

We generalize \cite[Proposition 3.3]{RYY}\cite[Corollary 3.2]{M3}\cite[Proposition 5]{St2}\cite[Proposition 2]{ASTT}\cite[Corollary 4.7]{M2}\cite[Lemma 3.3]{M5} as follows.
\begin{cor}\label{proj-bundle}
Let $\pi:\mathbb{P}(E)\rightarrow X$ be the projective bundle associated to a holomorphic vector bundle $E$ on a complex manifold $X$ and $\mathcal{L}$ a local system of $\underline{\mathbb{C}}_X$-modules of finite rank on $X$. Set $\emph{rank}_{\mathbb{C}}E=r$.

$(1)$ There exists an isomorphism
\begin{displaymath}
\bigoplus\limits_{i=0}^{r-1}\mathbb{H}^{k-2i}(X,\Omega_X^{[s-i,t-i]}(\mathcal{L}))\cong \mathbb{H}^{k}(\mathbb{P}(E),\Omega_{\mathbb{P}(E)}^{[s,t]}(\pi^{-1}\mathcal{L}))
\end{displaymath}
for any $k$, $s$, $t$.

$(2)$ Suppose that $X$ is compact and $s\leq t$. Then the truncated twisted Fr\"{o}licher spectral sequence for $(\mathbb{P}(E),\pi^{-1}\mathcal{L},s,t)$ degenerates at $E_1$-page if and only if so do those  for $(X,\mathcal{L},s-r+1,t-r+1)$, $\ldots$, $(X,\mathcal{L},s,t)$.
\end{cor}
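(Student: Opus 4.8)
The plan is to deduce both parts from the Leray--Hirsch theorem (Theorem \ref{L-H}), applied to $\pi:\mathbb{P}(E)\to X$. This is legitimate because $\pi$ is a holomorphic fiber bundle whose fibers are copies of $\mathbb{P}^{r-1}$, hence compact, so the hypothesis on the fibers is available as soon as I supply the required global forms.

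For Part $(1)$ the first step is to produce the forms demanded by Theorem \ref{L-H}. Fix a Hermitian metric on the tautological line bundle $\mathcal{O}_{\mathbb{P}(E)}(1)$ and let $h$ be its Chern form, a $d$-closed real $(1,1)$-form on $\mathbb{P}(E)$. Then the powers $h^{0}=1,h,\ldots,h^{r-1}$ are $d$-closed of pure bidegrees $(0,0),(1,1),\ldots,(r-1,r-1)$. Since $h$ restricts on each fiber to the hyperplane class, the restrictions $[h^{i}]_{\bar\partial}|_{\mathbb{P}^{r-1}}$ for $i=0,\ldots,r-1$ form a basis of $H^{\bullet,\bullet}_{\bar\partial}(\mathbb{P}^{r-1})$, which is the classical description of the Dolbeault cohomology of projective space. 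Applying Theorem \ref{L-H} with these $r$ forms, so that $(u_i,v_i)=(i,i)$ for $i=0,\ldots,r-1$, gives $k-u_i-v_i=k-2i$ and $[s-u_i,t-u_i]=[s-i,t-i]$, which is exactly the isomorphism in $(1)$.

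For Part $(2)$ I would first record two numerical consequences of $(1)$. Setting $s=t=p$ in $(1)$ and using $\mathbb{H}^{m}(X,\Omega_X^{[a,a]}(\mathcal{L}))\cong H^{m-a}(X,\mathcal{L}\otimes\Omega_X^{a})$ yields the twisted Dolbeault projective bundle formula
\[
h^{p,q}(\mathbb{P}(E),\pi^{-1}\mathcal{L})=\sum_{i=0}^{r-1}h^{p-i,q-i}(X,\mathcal{L}),
\]
while $(1)$ itself gives $b^{k}(\mathbb{P}(E),\Omega_{\mathbb{P}(E)}^{[s,t]}(\pi^{-1}\mathcal{L}))=\sum_{i=0}^{r-1}b^{k-2i}(X,\Omega_X^{[s-i,t-i]}(\mathcal{L}))$. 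Writing $\eta_i(m)=\sum_{p+q=m,\,s-i\le p\le t-i}h^{p,q}(X,\mathcal{L})$ and substituting the Hodge-number formula together with the reindexing $(p,q)\mapsto(p-i,q-i)$, the Hodge sum on $\mathbb{P}(E)$ in degree $k$ equals $\sum_{i=0}^{r-1}\eta_i(k-2i)$, which matches the Betti decomposition summand by summand.

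Recall that the truncated twisted Fr\"olicher spectral sequence degenerates at $E_1$ exactly when \eqref{inequality} is an equality in all degrees. Thus degeneration on $\mathbb{P}(E)$ for $(s,t)$ reads $\sum_{i=0}^{r-1}b^{k-2i}(X,\Omega_X^{[s-i,t-i]}(\mathcal{L}))=\sum_{i=0}^{r-1}\eta_i(k-2i)$ for every $k$. Here \eqref{inequality} applied on $X$ to each $(X,\mathcal{L},s-i,t-i)$ bounds each left summand above by the corresponding right summand, so equality of the totals forces equality term by term for that $k$; letting $k$ range over all integers makes the pairs $(i,k-2i)$ exhaust all $(i,m)$ with $0\le i\le r-1$, yielding $b^{m}(X,\Omega_X^{[s-i,t-i]}(\mathcal{L}))=\eta_i(m)$ for every such $i$ and every $m$, i.e.\ degeneration of each spectral sequence for $(X,\mathcal{L},s-i,t-i)$. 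The converse is immediate by summing the termwise equalities. I do not expect a serious obstacle; the points requiring care are the geometric input in $(1)$ (that the powers of $h$ restrict to a fiber basis) and, in $(2)$, the elementary but slightly delicate observation that a single scalar equality of sums upgrades to termwise equalities precisely because the inequality \eqref{inequality} holds uniformly for each index $i$.
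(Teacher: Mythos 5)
Your proof is correct and follows essentially the same route as the paper: part $(1)$ is Theorem \ref{L-H} applied to the powers of a first Chern form of the tautological bundle (the paper uses $\mathcal{O}_{\mathbb{P}(E)}(-1)$ rather than $\mathcal{O}_{\mathbb{P}(E)}(1)$, which is immaterial), and part $(2)$ is the same chain of comparisons between $b^k$ and the Hodge-number sums via \eqref{inequality}, with equality of totals upgrading to termwise equalities. The only cosmetic difference is that you derive the twisted Dolbeault projective bundle formula $h^{p,q}(\mathbb{P}(E),\pi^{-1}\mathcal{L})=\sum_{i=0}^{r-1}h^{p-i,q-i}(X,\mathcal{L})$ from part $(1)$ with $s=t$, whereas the paper cites it from the literature; your version is slightly more self-contained but the argument is otherwise identical.
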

\begin{proof}
Let $u\in A^{1,1}({\mathbb{P}(E)})$ be a first Chern form of the universal line bundle $\mathcal{O}_{\mathbb{P}(E)}(-1)$ on ${\mathbb{P}(E)}$ and
$h=[u]_{\bar{\partial}}\in H_{\bar{\partial}}^{1,1}({\mathbb{P}(E)})$ its Dolbeault class. For every $x\in X$, $1$, $h$, $\ldots$, $h^{r-1}$ restricted to the fibre $\pi^{-1}(x)=\mathbb{P}(E_x)$ freely linearly generate $H_{\bar{\partial}}^{\bullet,\bullet}(\mathbb{P}(E_x))$. By Theorem \ref{L-H}, we get $(1)$.

In general, we have
\begin{displaymath}
\begin{aligned}
b^k(\mathbb{P}(E),\Omega_{\mathbb{P}(E)}^{[s,t]}(\pi^{-1}\mathcal{L}))=&\sum_{i=0}^{r-1}b^{k-2i}(X,\Omega_X^{[s-i,t-i]}(\mathcal{L}))\quad(\mbox{by Corollary \ref{proj-bundle} (1)})\\
\leq&\sum_{i=0}^{r-1}\sum\limits_{\substack{p+q=k-2i\\s-i\leq p\leq t-i}}h^{p,q}(X,\mathcal{L})\qquad(\mbox{by (\ref{inequality})})\\
=&\sum\limits_{\substack{p+q=k\\s\leq p\leq t}}\sum_{i=0}^{r-1}h^{p-i,q-i}(X,\mathcal{L})   \\
=&\sum\limits_{\substack{p+q=k\\s\leq p\leq t}}h^{p,q}(\mathbb{P}(E),\pi^{-1}\mathcal{L})\qquad(\mbox{by \cite[Lemma 3.3]{RYY2} or \cite[Corollary 5.7(2) ]{M4}})
\end{aligned}
\end{displaymath}
So,
\begin{displaymath}
b^k(\mathbb{P}(E),\Omega_{\mathbb{P}(E)}^{[s,t]}(\pi^{-1}\mathcal{L}))=\sum\limits_{\substack{p+q=k\\s\leq p\leq t}}h^{p,q}(\mathbb{P}(E),\pi^{-1}\mathcal{L})\mbox{ for all $k$,}
\end{displaymath}
if and only if,
\begin{displaymath}
b^{k}(X,\Omega_X^{[s-i,t-i]}(\mathcal{L}))=\sum\limits_{\substack{p+q=k\\s-i\leq p\leq t-i}}h^{p,q}(X,\mathcal{L})\mbox{ for all $k$ and $0\leq i\leq r-1$.  }
\end{displaymath}
Thus $(2)$ follows.
\end{proof}

\begin{rem}
From the proof of \cite[Corollary 5.7]{M3}, we may similarly obtain the flag bundle formula of the  hypercohomology of  truncated twisted holomorphic de Rham complexes. Of course, its expression is much more sophisticated.
\end{rem}

For an $n$-dimensional complex manifold $X$, the inclusion $\Omega_X^{[p,n]}(\underline{\mathbb{C}}_X)\rightarrow  \Omega_X^{[0,n]}(\underline{\mathbb{C}}_X)$ induces the Hodge filtration on $H_{dR}^k(X,\mathbb{C})$ (\cite[Definition 8.2]{V}) as
\begin{displaymath}
F^pH_{dR}^k(X,\mathbb{C})=Im(\mathbb{H}^k(X,\Omega_X^{[p,n]}(\underline{\mathbb{C}}_X))\rightarrow \mathbb{H}^k(X,\Omega_X^{[0,n]}(\underline{\mathbb{C}}_X))=H_{dR}^k(X,\mathbb{C}))
\end{displaymath}
for any $k$ and $p$. By (\ref{computation1}) and (\ref{computation2}), the inclusions $S^{\bullet}(X,\underline{\mathbb{C}}_X,p,n)\rightarrow S^{\bullet}(X,\underline{\mathbb{C}}_X,0,n)$ and $T^{\bullet}(X,\underline{\mathbb{C}}_X,p,n)\rightarrow T^{\bullet}(X,\underline{\mathbb{C}}_X,0,n)$ induce
\begin{equation}\label{filtration1}
F^pH_{dR}^k(X,\mathbb{C})=Im(H^k(S^{\bullet}(X,\underline{\mathbb{C}}_X,p,n))\rightarrow H^k(S^{\bullet}(X,\underline{\mathbb{C}}_X,0,n)))
\end{equation}
and
\begin{equation}\label{filtration2}
F^pH_{dR}^k(X,\mathbb{C})=Im(H^k(T^{\bullet}(X,\underline{\mathbb{C}}_X,p,n))\rightarrow H^k(T^{\bullet}(X,\underline{\mathbb{C}}_X,0,n))),
\end{equation}
respectively.
\begin{cor}[{\cite[Lemma 3.4]{M5}}]\label{Hodge-filtration}
Let $\pi:\mathbb{P}(E)\rightarrow X$ be the projective bundle associated to a holomorphic vector bundle $E$ on a complex manifold $X$ and $\emph{rank}_{\mathbb{C}}E=r$.
Suppose that $u\in \mathcal{A}^{1,1}(\mathbb{P}(E))$ is a first Chern form of the universal line bundle $\mathcal{O}_{\mathbb{P}(E)}(-1)$ on $\mathbb{P}(E)$. Then
\begin{displaymath}
\sum\limits_{i=0}^{r-1}\pi^*(\bullet)\wedge u^i
\end{displaymath}
gives an isomorphism
\begin{displaymath}
\bigoplus_{i=0}^{r-1}F^{p-i}H_{dR}^{k-2i}(X,\mathbb{C})\tilde{\rightarrow} F^pH_{dR}^k(\mathbb{P}(E),\mathbb{C}),
\end{displaymath}
for any $k$, $p$.
\end{cor}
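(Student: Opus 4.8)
The plan is to deduce this from the Leray--Hirsch isomorphism of Corollary \ref{proj-bundle} applied to two different truncation ranges, using the description \eqref{filtration1} of the Hodge filtration as the image of the map induced by the inclusion of truncated complexes. Write $n=\dim_{\mathbb{C}}X$, so that $N:=\dim_{\mathbb{C}}\mathbb{P}(E)=n+r-1$, and recall that $\Omega_X^{[a,b]}(\underline{\mathbb{C}}_X)=\Omega_X^{[a,n]}(\underline{\mathbb{C}}_X)$ whenever $b\geq n$; in particular, since $N-i\geq n$ for $0\leq i\leq r-1$, the $X$-side complexes $\Omega_X^{[p-i,N-i]}(\underline{\mathbb{C}}_X)$ collapse to the standard $\Omega_X^{[p-i,n]}(\underline{\mathbb{C}}_X)$. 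I would first record the two instances of Corollary \ref{proj-bundle} that I need, both realized by the single cochain morphism $\sum_{i=0}^{r-1}\pi^*(\bullet)\wedge u^i$ that underlies the proof of Theorem \ref{L-H} applied to the $d$-closed forms $1,u,\dots,u^{r-1}$ (of pure degrees $(i,i)$). Taking $s=p$, $t=N$ gives an isomorphism
\[
\Phi\colon\ \bigoplus_{i=0}^{r-1}\mathbb{H}^{k-2i}(X,\Omega_X^{[p-i,n]}(\underline{\mathbb{C}}_X))\ \xrightarrow{\ \sim\ }\ \mathbb{H}^{k}(\mathbb{P}(E),\Omega_{\mathbb{P}(E)}^{[p,N]}(\underline{\mathbb{C}}_X)),
\]
and taking $s=0$, $t=N$ gives an isomorphism
\[
\Psi\colon\ \bigoplus_{i=0}^{r-1}H_{dR}^{k-2i}(X,\mathbb{C})\ \xrightarrow{\ \sim\ }\ H_{dR}^{k}(\mathbb{P}(E),\mathbb{C})
\]
which is precisely the map $\sum_{i=0}^{r-1}\pi^*(\bullet)\wedge u^i$ appearing in the statement.

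Next I would assemble the commutative square whose horizontal arrows are $\Phi$ and $\Psi$ and whose vertical arrows $\iota_X$, $\iota_{\mathbb{P}(E)}$ are induced by the inclusions $\Omega_X^{[p-i,n]}\hookrightarrow\Omega_X^{[0,n]}$ and $\Omega_{\mathbb{P}(E)}^{[p,N]}\hookrightarrow\Omega_{\mathbb{P}(E)}^{[0,N]}$. At the level of the double complexes this is the square built from the morphism $\sum_i\pi^*(\bullet)\wedge u^i$ and the natural inclusions $S^{\bullet,\bullet}(X,\underline{\mathbb{C}}_X,p-i,N-i)[-i,-i]\hookrightarrow S^{\bullet,\bullet}(X,\underline{\mathbb{C}}_X,0,N)[-i,-i]$ and $S^{\bullet,\bullet}(\mathbb{P}(E),\underline{\mathbb{C}}_X,p,N)\hookrightarrow S^{\bullet,\bullet}(\mathbb{P}(E),\underline{\mathbb{C}}_X,0,N)$. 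It commutes because these inclusions are the identity (or zero) in each bidegree, while pullback by $\pi$ and wedging by the $d$-closed forms $u^i$ are natural with respect to them. Passing to cohomology via \eqref{computation1} yields the commutative identity $\Psi\circ\iota_X=\iota_{\mathbb{P}(E)}\circ\Phi$; by \eqref{filtration1} the image of $\iota_X$ is $\bigoplus_{i=0}^{r-1}F^{p-i}H_{dR}^{k-2i}(X,\mathbb{C})$ and the image of $\iota_{\mathbb{P}(E)}$ is $F^pH_{dR}^k(\mathbb{P}(E),\mathbb{C})$.

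Finally I would conclude by a diagram chase. Since $\Phi$ is surjective,
\[
\begin{aligned}
F^pH_{dR}^k(\mathbb{P}(E),\mathbb{C})&=\mathrm{Im}\,\iota_{\mathbb{P}(E)}=\mathrm{Im}(\iota_{\mathbb{P}(E)}\circ\Phi)\\
&=\mathrm{Im}(\Psi\circ\iota_X)=\Psi\Big(\bigoplus_{i=0}^{r-1}F^{p-i}H_{dR}^{k-2i}(X,\mathbb{C})\Big),
\end{aligned}
\]
and because $\Psi$ is injective, its restriction to $\bigoplus_{i}F^{p-i}H_{dR}^{k-2i}(X,\mathbb{C})$ is an isomorphism onto $F^pH_{dR}^k(\mathbb{P}(E),\mathbb{C})$. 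As $\Psi$ is the map $\sum_i\pi^*(\bullet)\wedge u^i$, this restriction is exactly the asserted isomorphism. I expect the only delicate point to be the bookkeeping in the middle step: checking that the single cochain morphism $\sum_i\pi^*(\bullet)\wedge u^i$ simultaneously realizes both Leray--Hirsch isomorphisms and is strictly compatible with the inclusions defining the Hodge filtration (together with the index collapse $N-i\geq n$ that identifies the $X$-side complexes). Everything else is formal.
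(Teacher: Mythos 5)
Your proposal is correct and follows essentially the same route as the paper: both apply the Leray--Hirsch isomorphism of Corollary \ref{proj-bundle}(1) for the truncations $[p,n+r-1]$ and $[0,n+r-1]$, use the collapse $S^{\bullet}(X,\underline{\mathbb{C}}_X,p-i,n+r-1-i)=S^{\bullet}(X,\underline{\mathbb{C}}_X,p-i,n)$, and identify the Hodge filtrations as images of the inclusion-induced maps via (\ref{filtration1}) in a commutative square realized by the single cochain morphism $\sum_{i}\pi^*(\bullet)\wedge u^i$. Your explicit final diagram chase (surjectivity of $\Phi$ plus injectivity of $\Psi$) just spells out what the paper leaves implicit in ``Consequently, we get the corollary.''
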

\begin{proof}
Set $\textrm{dim}_{\mathbb{C}}X=n$. Let $u\in \mathcal{A}^{1,1}({\mathbb{P}(E)})$ be a first Chern form of the universal line bundle $\mathcal{O}_{\mathbb{P}(E)}(-1)$ on ${\mathbb{P}(E)}$. Then $\partial u=\bar{\partial}u=0$.
Consider the commutative diagram of complexes
\begin{displaymath}
\xymatrix{
\bigoplus\limits_{i=0}^{r-1}S^{\bullet}(X,\underline{\mathbb{C}}_X,p-i,n+r-1-i)[-2i] \qquad\ar[d]_{} \ar[r]^{\qquad\sum\limits_{i=0}^{r-1}\pi^*(\bullet)\wedge u^i}  &\quad S^{\bullet}(\mathbb{P}(E),\underline{\mathbb{C}}_{\mathbb{P}(E)},p,n+r-1) \ar[d]^{} \\
\bigoplus\limits_{i=0}^{r-1}S^{\bullet}(X,\underline{\mathbb{C}}_X,-i,n+r-1-i)[-2i]\qquad \ar[r]^{\qquad\sum\limits_{i=0}^{r-1}\pi^*(\bullet)\wedge u^i}  &\quad S^{\bullet}(\mathbb{P}(E),\underline{\mathbb{C}}_{\mathbb{P}(E)},0,n+r-1).}
\end{displaymath}
Notice that $S^{\bullet}(X,\underline{\mathbb{C}}_X,p-i,n+r-1-i)=S^{\bullet}(X,\underline{\mathbb{C}}_X,p-i,n)$ and $S^{\bullet}(X,\underline{\mathbb{C}}_X,-i,n+r-1-i)=S^{\bullet}(X,\underline{\mathbb{C}}_X,0,n)$ for $0\leq i\leq r-1$. Thus we have  the commutative diagram
\begin{displaymath}
\xymatrix{
 \bigoplus\limits_{i=0}^{r-1}H^{k-2i}(S^{\bullet}(X,\underline{\mathbb{C}}_X,p-i,n))\ar[d]_{} \ar[r]^{\cong\quad}  & H^k(S^{\bullet}(\mathbb{P}(E),\underline{\mathbb{C}}_{\mathbb{P}(E)},p,n+r-1)) \ar[d]^{} \\
\bigoplus\limits_{i=0}^{r-1}H^{k-2i}(S^{\bullet}(X,\underline{\mathbb{C}}_X,0,n)) \ar[r]^{\cong\qquad}  & H^k(S^{\bullet}(\mathbb{P}(E),\underline{\mathbb{C}}_{\mathbb{P}(E)},0,n+r-1)),}
\end{displaymath}
where the two horizontal maps are isomorphisms by Corollary \ref{proj-bundle} $(1)$. Consequently, we get the corollary by (\ref{filtration1}).
\end{proof}

\subsection{K\"{u}nneth theorem}
Let $X$, $Y$ be complex manifolds and  let $pr_1$, $pr_2$ be projections from $X\times Y$ onto $X$, $Y$, respectively. For sheaves $\mathcal{L}$ and $\mathcal{H}$ of $\underline{\mathbb{C}}_X$- and $\underline{\mathbb{C}}_Y$-modules  on $X$ and $Y$ respectively,  the \emph{external tensor product} of $\mathcal{L}$ and $\mathcal{H}$ on $X\times Y$ is defined as
\begin{displaymath}
\mathcal{L}\boxtimes\mathcal{H}=pr_1^{-1}\mathcal{L}\otimes_{\underline{\mathbb{C}}_{X\times Y}}pr_2^{-1}\mathcal{H}.
\end{displaymath}
For coherent analytic sheaves $\mathcal{F}$ and $\mathcal{G}$ of $\mathcal{O}_X$- and $\mathcal{O}_Y$-modules  on $X$ and $Y$ respectively,  the \emph{analytic external tensor product} of $\mathcal{F}$ and $\mathcal{G}$ on $X\times Y$  is defined as
\begin{displaymath}
\mathcal{F}\boxtimes\mathcal{G}=pr_1^*\mathcal{F}\otimes_{\mathcal{O}_{X\times Y}}pr_2^*\mathcal{G}.
\end{displaymath}

Now, we verify Theorem \ref{K}.
\begin{proof}
Fix two integers $s$ and $t$.
Consider the double complexes
\begin{displaymath}
K^{\bullet,\bullet}=\bigoplus\limits_{\substack{u+w=s\\v+w=t}}ss(S^{\bullet,\bullet}(X,\mathcal{L},u,v)\otimes_{\mathbb{C}} S^{\bullet,\bullet}(Y,\mathcal{H},w,w)),
\end{displaymath}
\begin{displaymath}
L^{\bullet,\bullet}=S^{\bullet,\bullet}(X\times Y,\mathcal{L}\boxtimes \mathcal{H},s,t)
\end{displaymath}
and a morphism $f=pr_1^*(\bullet)\wedge pr_2^*(\bullet):K^{\bullet,\bullet}\rightarrow L^{\bullet,\bullet}$.
The first pages of the spectral sequences associated to $K^{\bullet,\bullet}$ and $L^{\bullet,\bullet}$ are calculated as follows
\begin{displaymath}
\begin{aligned}
_{K}E_1^{a,b}
=&\bigoplus\limits_{\substack{u+w=s\\v+w=t}}\bigoplus\limits_{\substack{p+q=a\\k+l=b}}H^{k}(S^{p,\bullet}(X,\mathcal{L},u,v))\otimes_{\mathbb{C}} H^{l}(S^{q,\bullet}(Y,\mathcal{H},w,w))\quad\mbox{ }(\mbox{by Proposition \ref{cohomology of complexes} (2)})\\
=&\bigoplus\limits_{\substack{p+q=a\\k+l=b}}\bigoplus\limits_{\substack{u+w=s\\v+w=t\\u\leq p\leq v\\w=q}}H^{k}(S^{p,\bullet}(X,\mathcal{L},u,v))\otimes_{\mathbb{C}} H^{l}(S^{q,\bullet}(Y,\mathcal{H},w,w))\quad\mbox{ }(\mbox{by (\ref{ss1})})\\
=&\bigoplus\limits_{\substack{p+q=a\\k+l=b}}\bigoplus\limits_{\substack{s-q\leq p\leq t-q}}H^{k}(X,\mathcal{L}\otimes\Omega_X^p)\otimes_{\mathbb{C}} H^{l}(Y,\mathcal{H}\otimes\Omega_Y^q)\qquad\qquad(\mbox{by (\ref{ss1})})\\
=&\left\{
 \begin{array}{ll}
\bigoplus\limits_{\substack{p+q=a\\k+l=b}}H^{k}(X,\mathcal{L}\otimes\Omega_X^p)\otimes_{\mathbb{C}} H^{l}(Y,\mathcal{H}\otimes\Omega_Y^q),&~s\leq a\leq t\\
 &\\
 0,&~\textrm{others}
 \end{array}
 \right.
\end{aligned}
\end{displaymath}
and
\begin{displaymath}
_{L}E_1^{a,b}=\left\{
 \begin{array}{ll}
H^{b}(X\times Y,(\mathcal{L}\boxtimes \mathcal{H})\otimes\Omega_{X\times Y}^a),&~s\leq a\leq t\\
 &\qquad\qquad\qquad(\mbox{by Proposition \ref{cohomology of complexes} (2) and (\ref{ss1})})\\
 0,&~\textrm{others}
 \end{array}.
 \right.
\end{displaymath}
The morphism $E^{a,b}_1(f): _{K}E_1^{a,b}\rightarrow _{L}E_1^{a,b}$ at $E_1$-pages induced by $f$ is just the cartesian product
\begin{displaymath}
\bigoplus\limits_{\substack{p+q=a\\k+l=b}}H^{k}(X,\mathcal{L}\otimes\Omega_X^p)\otimes_{\mathbb{C}} H^{l}(Y,\mathcal{H}\otimes\Omega_Y^q)\rightarrow H^{b}(X\times Y,(\mathcal{L}\boxtimes \mathcal{H})\otimes\Omega_{X\times Y}^a)
\end{displaymath}
for $s\leq a\leq t$ and the identity between zero spaces for other cases.
Notice that $(\mathcal{L}\otimes\Omega_X^p)\boxtimes (\mathcal{H}\otimes\Omega_Y^q)=(\mathcal{L}\boxtimes \mathcal{H})\otimes (\Omega_X^p\boxtimes \Omega_Y^q)$ and $\Omega_{X\times Y}^a=\bigoplus\limits_{p+q=a}(\Omega_{X}^p\boxtimes\Omega_{Y}^q)$.
By \cite[IX, (5.23) (5.24)]{Dem}, $E^{a,b}_1(f)$ is an isomorphism for any $a$, $b$, so is the morphism $H^{k}(sK^{\bullet,\bullet})\rightarrow H^{k}(sL^{\bullet,\bullet})$ induced by $f$ for any $k$.
By Proposition \ref{cohomology of complexes} (1) and (\ref{computation1}),
\begin{displaymath}
H^{c}(sK^{\bullet,\bullet})\cong \bigoplus\limits_{\substack{a+b=c\\u+w=s\\v+w=t}}\mathbb{H}^{a}(X,\Omega_X^{[u,v]}(\mathcal{L}))\otimes \mathbb{H}^{b}(Y,\Omega_Y^{[w,w]}(\mathcal{H}))
\end{displaymath}
and $H^{c}(sL^{\bullet,\bullet})\cong\mathbb{H}^{c}(X\times Y,\Omega_{X\times Y}^{[s,t]}(\mathcal{L}\boxtimes\mathcal{H}))$.
We conclude this  theorem.
\end{proof}

\subsection{Poincar\'{e}-Serre duality theorem}
Let $X$ be a  complex manifold and $\mathcal{L}$ a local system of $\underline{\mathbb{C}}_X$-modules of rank $l$ on $X$.
Denote by $\mathcal{L}^{\vee}=\mathcal{H}om_{\underline{\mathbb{C}}_X}(\mathcal{L},\underline{\mathbb{C}}_X)$ the dual of $\mathcal{L}$.
For $\gamma\in\Gamma(X,\mathcal{L}\otimes\mathcal{L}^\vee\otimes\mathcal{A}_X^{p})$, we construct  $tr(\gamma)$ as follows:
Suppose that $U$ is an open subset of $X$ such that $\mathcal{L}|_U$ is trivial.
Let $e_1$, $\ldots$, $e_l$ and $f_1$, $\ldots$, $f_l$ be bases of $\Gamma(U,\mathcal{L})$ and $\Gamma(U, \mathcal{L}^{\vee})$, respectively.
Set $\gamma=\sum\limits_{\substack{1\leq i\leq l\\1\leq j\leq l}}e_i\otimes f_i\otimes\gamma_{ij}$  on $U$, where $\gamma_{ij}\in\mathcal{A}^{p}(U)$ for any $1\leq i, j\leq l$.
Then $tr(\gamma)$ on $U$ is defined as
$\sum\limits_{\substack{1\leq i\leq l\\1\leq j\leq l}} \langle e_i, f_j\rangle\gamma_{ij}$, where $\langle,\rangle$ is the contraction between $\mathcal{L}$ and $\mathcal{L}^{\vee}$.
This construction is global.
By the construction, the action $tr$ on $\gamma\in\Gamma(X,\mathcal{L}\otimes\mathcal{L}^\vee\otimes\mathcal{A}_X^{p})$  is just contracting the parts of $\mathcal{L}$ and $\mathcal{L}^{\vee}$ and preserving the part of the form of $\gamma$, which was essentially defined in \cite{Se} (even in more general cases).

Assume that $\mathcal{H}$ is a local system of $\underline{\mathbb{C}}_X$-modules of finite rank on $X$. For $\alpha\in\Gamma(X,\mathcal{L}\otimes\mathcal{A}_X^{p})$ and $\beta\in\Gamma(X,\mathcal{H}\otimes\mathcal{A}_X^{q})$,
\begin{equation}\label{differential of product}
d(tr(\alpha\wedge\beta))=tr(d\alpha\wedge\beta)+(-1)^ptr(\alpha\wedge d\beta),
\end{equation}
which also holds if using $\partial$ or $\bar{\partial}$ instead of $d$.

Theorem \ref{P-S} is shown as follows.
\begin{proof}
Fix two integers $s$ and $t$.
Set 
\begin{displaymath}
K^{\bullet,\bullet}=S^{\bullet,\bullet}(X,\mathcal{L},s,t) \mbox{ and } L^{\bullet,\bullet}=\left(S^{\bullet,\bullet}(X,\mathcal{L}^{\vee},n-t,n-s)\right)^*[-n,-n].
\end{displaymath}
By (\ref{ss1}) and Lemma \ref{Duality}, we have
\begin{displaymath}
_{K}E_1^{p,q}=\left\{
 \begin{array}{ll}
H^{q}(X,\mathcal{L}\otimes\Omega_X^p),&~s\leq p\leq t\\
 &\\
 0,&~\textrm{others}
 \end{array}
 \right.
\end{displaymath}
and 
\begin{displaymath}
\begin{aligned}
_{L}E_1^{p,q}=&H^{q-n}((S^{\bullet,\bullet}(X,\mathcal{L}^\vee,n-t,n-s))^{*p-n,\bullet})\\
=&H^{q-n}((S^{n-p,\bullet}(X,\mathcal{L}^\vee,n-t,n-s))^*)\\
=&\left\{
 \begin{array}{ll}
(H^{n-q}(X,\mathcal{L}^\vee\otimes \Omega_X^{n-p}))^*,&~s\leq p\leq t\\
 &\\
 0,&~\textrm{others}.
 \end{array}
 \right.
\end{aligned}
\end{displaymath}
The map
\begin{displaymath}
\mathcal{D}:K^{\bullet,\bullet}\rightarrow L^{\bullet,\bullet}, \mbox{ } \alpha\mapsto \int_X tr(\alpha\wedge\bullet)
\end{displaymath}
is a  morphism of double complexes by (\ref{differential of product}).
The morphism  $E_1^{p,q}(\mathcal{D}): _KE_1^{p,q}\rightarrow _LE_1^{p,q}$  at $E_1$-pages induced by $\mathcal{D}$ is just the Serre duality map  $H^{q}(X,\mathcal{L}\otimes\Omega_X^p)\rightarrow (H^{n-q}(X,\mathcal{L}^\vee\otimes \Omega_X^{n-p}))^*$ for $s\leq p\leq t$ and the identity between zero spaces for other cases.
By Serre duality theorem, $E_1^{p,q}(\mathcal{D})$ is an isomorphism for any $p$, $q$, so is  $H^{k}(sK^{\bullet,\bullet})\rightarrow H^{k}(sL^{\bullet,\bullet})$ induced by $\mathcal{D}$ for any $k$.
By Lemma \ref{simple-duality},
\begin{displaymath}
\begin{aligned}
sL^{\bullet,\bullet}=&\left(s\left(S^{\bullet,\bullet}(X,\mathcal{L}^{\vee},n-t,n-s)^*\right)\right)[-2n]\\
\cong& S^{\bullet}(X,\mathcal{L}^{\vee},n-t,n-s)^*[-2n].
\end{aligned}
\end{displaymath}
By (\ref{computation1}) and Lemma \ref{Duality}, we have
\begin{displaymath}
H^{k}(sK^{\bullet,\bullet})\cong \mathbb{H}^{k}(X,\Omega_X^{[s,t]}(\mathcal{L}))
\mbox{ and }
H^{k}(sL^{\bullet,\bullet})\cong \left(\mathbb{H}^{2n-k}(X,\Omega_X^{[n-t,n-s]}(\mathcal{L}^{\vee}))\right)^*,
\end{displaymath}
from which Theorem \ref{P-S} follows.
\end{proof}

\begin{rem}
Theorem \ref{P-S} is the Poincare duality theorem if  $s=0$, $t=n$ and a special case of the Serre duality theorem if $s=t$.
\end{rem}

\subsection{Blowup formula}
Let $\pi:\widetilde{X}\rightarrow X$ be the blowup of a  complex manifold $X$ along a  complex submaifold $Y$ and $\mathcal{L}$ a local system of $\underline{\mathbb{C}}_X$-modules of finite rank on $X$.
As we know, $\pi|_E:E=\pi^{-1}(Y)\rightarrow Y$ can be naturally viewed as the projective bundle $\mathbb{P}(N_{Y/X})$ associated to the normal bundle $N_{Y/X}$ of $Y$ in $X$.
Let $u\in \mathcal{A}^{1,1}(E)$ be a first Chern form of the universal line bundle $\mathcal{O}_{E}(-1)$ on $E\cong{\mathbb{P}(N_{Y/X})}$.
Suppose that $i_Y:Y\rightarrow X$ and  $i_E:E\rightarrow \widetilde{X}$ are the inclusions.  Set $r=\textrm{codim}_{\mathbb{C}}Y\geq 2$.

By \cite[Theorem 1.2]{M4}, Theorem  \ref{blowup} can be similarly proved  with  Theorems \ref{L-H}. Now, we provide an alternative proof as follows.
\begin{proof}
Consider the complexes
\begin{displaymath}
K^{\bullet}(s,t)=S^{\bullet}(X,\mathcal{L},s,t)\oplus\bigoplus_{i=1}^{r-1}S^{\bullet}(Y,i_Y^{-1}\mathcal{L},s-i,t-i)[-2i]
\end{displaymath}
and $L^{\bullet}(s,t)=T^{\bullet}(\widetilde{X},\pi^{-1}\mathcal{L},s,t)$ and the morphism
\begin{displaymath}
f(s,t)=\pi^*+\sum\limits_{i=1}^{r-1}i_{E*}\circ (u^{i-1}\wedge)\circ (\pi|_{E})^*:K^{\bullet}(s,t)\rightarrow L^{\bullet}(s,t).
\end{displaymath}
By (\ref{computation1}) and  (\ref{computation2}), the left side and the right side of (\ref{blowup0}) are $H^k(K^\bullet(s,t))$ and  $H^k(L^\bullet(s,t))$ respectively. Our goal is to show that $H^k(f(s,t)):H^k(K^{\bullet}(s,t))\rightarrow H^k(L^{\bullet}(s,t))$ is an isomorphism for any $s$, $t$.

We may assume that $0\leq s\leq t$. By  (\ref{exact}),  there is the commutative diagram
\begin{displaymath}
\xymatrix{
0\ar[r]&K^{\bullet}(s,t)\ar[d]^{f(s,t)}\ar[r]^{} & K^{\bullet}(0,t)\ar[d]^{f(0,t)}\ar[r]^{}& K^{\bullet}(0,s-1) \ar[d]^{f(0,s-1)}\ar[r]& 0\\
0\ar[r]&L^{\bullet}(s,t)    \ar[r]^{}& L^{\bullet}(0,t)\ar[r]^{} & L^{\bullet}(0,s-1)     \ar[r]& 0}
\end{displaymath}
of short exact sequences of complexes, which induces the commutative diagram
\begin{equation}\label{long exact}
\tiny{\xymatrix{
    H^{k-1}(K^{\bullet}(0,t)) \ar[d]^{H^{k-1}(f(s,t))} \ar[r]&H^{k-1}(K^{\bullet}(0,s-1))\ar[d]^{H^{k-1}(f(0,s-1))}\ar[r]&H^k(K^{\bullet}(s,t)) \ar[d]^{H^k(f(s,t))} \ar[r]& H^k(K^{\bullet}(0,t)) \ar[d]^{H^k(f(0,t))}\ar[r]&  H^k(K^{\bullet}(0,s-1))\ar[d]^{H^k(f(0,s-1))}\\
 H^{k-1}(L^{\bullet}(0,t))       \ar[r]&H^{k-1}(L^{\bullet}(0,s-1))     \ar[r] & H^k(L^{\bullet}(s,t))\ar[r]&H^k(L^{\bullet}(0,t))       \ar[r]& H^k(L^{\bullet}(0,s-1)) }}
\end{equation}
of long exact sequences. By \cite[Theorem 1.2]{M4}, $H^\bullet(f(t,t))$ is an isomorphism for any $t$. Fisrt, consider  (\ref{long exact}) for $s=t$. By the induction on $t\geq 0$ and the five-lemma,  $H^\bullet(f(0,t))$ is an isomorphism for any $t\geq 0$. Furthermore, $H^\bullet(f(s,t))$ is an isomorphism  for any $s$, $t$, by the five-lemma  again in (\ref{long exact}) for general cases.

\end{proof}

\begin{rem}
Theorem \ref{blowup} for $s=0$, $t=n$ is just \cite[Theorem 1]{CY} and the first formula of \cite[Theorem 1.2]{M4}, while the two proofs here are quite different from those there.
\end{rem}

Following the  proof of Corollary \ref{proj-bundle} (2), we generalize \cite[Theorem 1.6]{RYY}\cite[Corollary 2]{CY} as follows.
\begin{cor}\label{c2}
Under the hypotheses of Theorem \ref{blowup}, suppose that $X$ is compact and $s\leq t$. Then the truncated twisted Fr\"{o}licher spectral sequence for $(\widetilde{X},\pi^{-1}\mathcal{L},s,t)$ degenerates at $E_1$-page if and only if so do those for  $(X,\mathcal{L},s,t)$, $(Y,i_Y^{-1}\mathcal{L},s-r+1,t-r+1)$, $\ldots$, $(Y,i_Y^{-1}\mathcal{L},s-1,t-1)$.
\end{cor}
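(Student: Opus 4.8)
The plan is to follow the dimension-counting argument used for Corollary \ref{proj-bundle} (2), replacing the projective bundle formula there by the blowup formula of Theorem \ref{blowup}. Recall from the end of Section 3.5 that, for a compact manifold, the truncated twisted Fr\"{o}licher spectral sequence for $(X,\mathcal{L},s,t)$ degenerates at $E_1$ if and only if the inequality (\ref{inequality}) is an equality for every $k$. So the statement reduces to comparing the numbers $b^k$ with the corresponding sums of twisted Hodge numbers $h^{p,q}$ on both sides of (\ref{blowup0}).

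First I would take dimensions in the isomorphism (\ref{blowup0}) of Theorem \ref{blowup} to obtain
\begin{displaymath}
b^k(\widetilde{X},\Omega_{\widetilde{X}}^{[s,t]}(\pi^{-1}\mathcal{L}))=b^k(X,\Omega_X^{[s,t]}(\mathcal{L}))+\sum_{i=1}^{r-1}b^{k-2i}(Y,\Omega_Y^{[s-i,t-i]}(i_Y^{-1}\mathcal{L})).
\end{displaymath}
Next I would bound each summand on the right by (\ref{inequality}) and reindex the resulting double sum by the substitution $(p,q)\mapsto(p-i,q-i)$, exactly as in the computation displayed in the proof of Corollary \ref{proj-bundle} (2), to get
\begin{displaymath}
b^k(\widetilde{X},\Omega_{\widetilde{X}}^{[s,t]}(\pi^{-1}\mathcal{L}))\leq\sum_{\substack{p+q=k\\s\leq p\leq t}}\left(h^{p,q}(X,\mathcal{L})+\sum_{i=1}^{r-1}h^{p-i,q-i}(Y,i_Y^{-1}\mathcal{L})\right).
\end{displaymath}
The case $s=t$ of Theorem \ref{blowup}, combined with $\mathbb{H}^k(X,\Omega_X^{[p,p]}(\mathcal{L}))\cong H^{k-p}(X,\mathcal{L}\otimes\Omega^p_X)$, gives the blowup formula for twisted Hodge numbers $h^{p,q}(\widetilde{X},\pi^{-1}\mathcal{L})=h^{p,q}(X,\mathcal{L})+\sum_{i=1}^{r-1}h^{p-i,q-i}(Y,i_Y^{-1}\mathcal{L})$, so the right-hand side above equals $\sum_{p+q=k,\,s\leq p\leq t}h^{p,q}(\widetilde{X},\pi^{-1}\mathcal{L})$, which is precisely (\ref{inequality}) for the blowup.

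The decisive step is then to read off the equivalence from these relations. For the ``if'' direction, degeneration of all the listed spectral sequences turns every use of (\ref{inequality}) into an equality, whence the chain above forces equality for $(\widetilde{X},\pi^{-1}\mathcal{L},s,t)$. For the ``only if'' direction I would use that $b^k(X,\Omega_X^{[s,t]}(\mathcal{L}))\leq\sum h^{p,q}(X,\mathcal{L})$ and $b^{k-2i}(Y,\Omega_Y^{[s-i,t-i]}(i_Y^{-1}\mathcal{L}))\leq\sum h^{p,q}(Y,i_Y^{-1}\mathcal{L})$ are inequalities between nonnegative quantities whose totals agree for every $k$; hence equality of the totals forces equality in each summand for every $k$. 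Letting $k$ range over $\mathbb{Z}$, and noting that $k-2i$ also ranges over $\mathbb{Z}$ for each fixed $i$, this yields (\ref{inequality}) as an equality for $(X,\mathcal{L},s,t)$ and for each $(Y,i_Y^{-1}\mathcal{L},s-i,t-i)$ with $1\leq i\leq r-1$. The only point requiring care, and the one I expect to be the main (if modest) obstacle, is this bookkeeping: one must verify that equality of the two sums over all $k$ genuinely separates into equalities for $X$ and for each shifted copy of $Y$ individually, which works precisely because every summand is dominated termwise and a sum of nonnegative gaps vanishes only when each gap does.
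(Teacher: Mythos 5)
Your proposal is correct and follows essentially the same route as the paper, which proves Corollary \ref{c2} precisely by repeating the dimension-counting argument of Corollary \ref{proj-bundle} (2) with the blowup decomposition of Theorem \ref{blowup} in place of the projective bundle formula and the Dolbeault blowup formula (the $s=t$ case) in place of the Dolbeault projective bundle formula. The bookkeeping step you flag --- that equality of the totals forces equality in each nonnegative summand, with $k-2i$ ranging over all integers as $k$ does --- is exactly the point the paper's argument relies on as well.
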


With the similar proof of Corollary \ref{Hodge-filtration}, we easily get the Hodge filtration of a blowup as follows.
\begin{cor}[{\cite[Lemma 3.7]{M5}}]
Under the hypotheses of Theorem \ref{blowup},
\begin{displaymath}
\pi^*+\sum\limits_{i=1}^{r-1}i_{E*}\circ (u^{i-1}\wedge)\circ (\pi|_{E})^*
\end{displaymath}
gives an isomorphism
\begin{displaymath}
F^pH_{dR}^k(X,\mathbb{C})\oplus\bigoplus_{i=1}^{r-1}F^{p-i}H_{dR}^{k-2i}(Y,\mathbb{C})\tilde{\rightarrow} F^pH_{dR}^k(\widetilde{X},\mathbb{C}),
\end{displaymath}
for any $k$, $p$.
\end{cor}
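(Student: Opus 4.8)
The plan is to imitate the proof of Corollary~\ref{Hodge-filtration}, replacing the projective-bundle data by the blowup complexes $K^{\bullet}(s,t)$, $L^{\bullet}(s,t)$ and the morphism $f(s,t)$ from the proof of Theorem~\ref{blowup}, and specializing to $\mathcal{L}=\underline{\mathbb{C}}_X$. Write $n=\dim_{\mathbb{C}}X=\dim_{\mathbb{C}}\widetilde{X}$, so $\dim_{\mathbb{C}}Y=n-r$. In this case
\begin{displaymath}
K^{\bullet}(s,t)=S^{\bullet}(X,\underline{\mathbb{C}}_X,s,t)\oplus\bigoplus_{i=1}^{r-1}S^{\bullet}(Y,\underline{\mathbb{C}}_Y,s-i,t-i)[-2i],\quad L^{\bullet}(s,t)=T^{\bullet}(\widetilde{X},\underline{\mathbb{C}}_{\widetilde{X}},s,t),
\end{displaymath}
and $f(s,t)=\pi^*+\sum_{i=1}^{r-1}i_{E*}\circ(u^{i-1}\wedge)\circ(\pi|_E)^*$. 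I would work throughout with the upper index $t=n$, which is exactly what records the Hodge filtrations.

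First I would note the truncation identifications that turn these complexes into Hodge-filtration complexes. Since $\dim_{\mathbb{C}}Y=n-r$ and $1\leq i\leq r-1$ give $n-i>n-r$, one has $S^{\bullet}(Y,\underline{\mathbb{C}}_Y,p-i,n-i)=S^{\bullet}(Y,\underline{\mathbb{C}}_Y,p-i,n-r)$ and $S^{\bullet}(Y,\underline{\mathbb{C}}_Y,-i,n-i)=S^{\bullet}(Y,\underline{\mathbb{C}}_Y,0,n-r)$; these are the complexes whose induced map realizes $F^{p-i}H_{dR}^{k-2i}(Y,\mathbb{C})$ via (\ref{filtration1}) applied to $Y$. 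Similarly $S^{\bullet}(X,\underline{\mathbb{C}}_X,p,n)$ and $S^{\bullet}(X,\underline{\mathbb{C}}_X,0,n)$ realize $F^pH_{dR}^k(X,\mathbb{C})$ via (\ref{filtration1}), and $T^{\bullet}(\widetilde{X},\underline{\mathbb{C}}_{\widetilde{X}},p,n)$, $T^{\bullet}(\widetilde{X},\underline{\mathbb{C}}_{\widetilde{X}},0,n)$ realize $F^pH_{dR}^k(\widetilde{X},\mathbb{C})$ via (\ref{filtration2}).

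Next I would assemble the commutative square of complexes
\begin{displaymath}
\xymatrix{
K^{\bullet}(p,n)\ar[d]\ar[r]^{f(p,n)} & L^{\bullet}(p,n)\ar[d]\\
K^{\bullet}(0,n)\ar[r]^{f(0,n)} & L^{\bullet}(0,n)
}
\end{displaymath}
whose vertical arrows are the inclusions of truncated complexes from (\ref{exact}). The only thing to check is that these inclusions commute with $f$: each building block of $f$ raises the holomorphic degree by a fixed amount (namely $0$ for $\pi^*$ and $(\pi|_E)^*$, $i-1$ for $u^{i-1}\wedge$, and $1$ for $i_{E*}$), so the $i$-th composite raises it by exactly $i$ and $f$ carries the part of $K^{\bullet}(0,n)$ of holomorphic degree $\geq p$ into the part of $L^{\bullet}(0,n)$ of holomorphic degree $\geq p$; this is precisely the compatibility with the truncation inclusions, and it is the same degree bookkeeping that made $f(s,t)$ a morphism of complexes in the proof of Theorem~\ref{blowup}. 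Passing to cohomology, Theorem~\ref{blowup} makes both $H^k(f(p,n))$ and $H^k(f(0,n))$ isomorphisms; by the identifications of the previous paragraph the image of the left vertical map is $F^pH_{dR}^k(X,\mathbb{C})\oplus\bigoplus_{i=1}^{r-1}F^{p-i}H_{dR}^{k-2i}(Y,\mathbb{C})$, and by (\ref{filtration2}) the image of the right vertical map is $F^pH_{dR}^k(\widetilde{X},\mathbb{C})$.

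To conclude I would use the elementary fact that in a commutative square with isomorphic horizontal arrows the bottom arrow restricts to an isomorphism between the images of the two vertical arrows; here the restricted isomorphism is induced by $f(0,n)=\pi^*+\sum_{i=1}^{r-1}i_{E*}\circ(u^{i-1}\wedge)\circ(\pi|_E)^*$, which is exactly the asserted map. I expect the main obstacle to be the compatibility verification in the third step---showing both that $f$ lands in the correct upper-index-$n$ truncation and that it commutes with the truncation inclusions; once this degree bookkeeping is settled, the remainder is a formal diagram chase resting on Theorem~\ref{blowup} and the filtration formulas (\ref{filtration1})--(\ref{filtration2}).
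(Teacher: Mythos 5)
Your proposal is correct and takes essentially the same route as the paper, which proves this corollary by transplanting the argument of Corollary~\ref{Hodge-filtration} to the blowup complexes $K^{\bullet}(s,t)$, $L^{\bullet}(s,t)$ and the morphism $f(s,t)$, invoking Theorem~\ref{blowup} for the horizontal isomorphisms and (\ref{filtration1})--(\ref{filtration2}) to identify the images of the truncation inclusions. The degree bookkeeping for $i_{E*}\circ(u^{i-1}\wedge)\circ(\pi|_E)^*$ and the image-restriction diagram chase that you spell out are exactly what the paper leaves implicit.
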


\subsection{A remark}
In the proofs of Theorems \ref{L-H}-\ref{P-S} and the first proof of Theorem \ref{blowup}, we use almost the same steps, that is, first proving that the morphisms between double complexes induce isomorphisms at $E_1$-pages and then obtaining their induced isomorphisms at $H$-pages, where we need the corresponding results (respectively, the Leray-Hirsch, K\"{u}nneth, Serre duality theorems and the blowup formula) on the twisted Dolbeault cohomology in the former steps. In the second proof of Theorem \ref{blowup}, we use the blowup formula on the twisted Dolbeault cohomology and the five-lemma. So  the twisted Dolbeault cohomology and some algebraic machines  play significant  roles in the research of the hypercohomology of truncated twisted holomorphic de Rham complexes. For the twisted Dolbeault cohomology, we refer to \cite{M4,RYY2,W}.

%


\end{document}